\definecolor{darkred}{RGB}{100,0,0}
\definecolor{darkgreen}{RGB}{0,100,0}
\definecolor{darkblue}{RGB}{0,0,150}
\def\Beta{\mathsf{B}}
\def\hyp{\mathscr{H}}
\newtheorem{thm}{Theorem}
\newtheorem{prp}{Proposition}
\newtheorem{lem}{Lemma}
\newtheorem{cor}{Corollary}
\theoremstyle{remark}
\newtheorem{rem}{Remark}
\def\beq{\begin{equation}} 
\def\eeq{\end{equation}}
\def\beqn{\begin{eqnarray*}}
\def\eeqn{\end{eqnarray*}}
\def\Bitem{\begin{itemize}\setlength{\itemsep}{.2in}}
\def\bitem{\begin{itemize}\setlength{\itemsep}{.05in}}
\def\eitem{\end{itemize}}
\def\Benum{\begin{enumerate}\setlength{\itemsep}{.2in}}
\def\benum{\begin{enumerate}\setlength{\itemsep}{.05in}}
\def\eenum{\end{enumerate}}
\def\bmult{\begin{multline*}}
\def\emult{\end{multline*}}
\def\bcenter{\begin{center}}
\def\ecenter{\end{center}}
\def\bframe{\begin{frame}}
\def\eframe{\end{frame}}
\newcommand{\thmref}[1]{Theorem~\ref{thm:#1}}
\newcommand{\prpref}[1]{Proposition~\ref{prp:#1}}
\newcommand{\lemref}[1]{Lemma~\ref{lem:#1}}
\newcommand{\secref}[1]{Section~\ref{sec:#1}}
\newcommand{\figref}[1]{Figure~\ref{fig:#1}}
\newcommand{\remref}[1]{Remark~\ref{rem:#1}}
\DeclareMathOperator*{\argmax}{arg\, max}
\DeclareMathOperator*{\argmin}{arg\, min}
\def\cK{\mathcal{K}}
\def\cT{\mathcal{T}}
\def\bbP{\mathbb{P}}
\def\bbR{\mathbb{R}}
\renewcommand{\P}{\operatorname{\mathbb{P}}}
\def\iid{\stackrel{\rm iid}{\sim}}
\def\Bin{\text{Bin}}
\def\eps{\varepsilon}
\def\iff{\ \Leftrightarrow \ }
\def\1{\mathbbm{1}}
\definecolor{purple}{rgb}{0.4,.1,.9}
\newcommand\blfootnote[1]{%
  \begingroup
  \renewcommand\thefootnote{}\footnote{#1}%
  \addtocounter{footnote}{-1}%
  \endgroup
}
\begin{document}
\thispagestyle{empty}

\title{Detection of Sparse Mixtures: \\ Higher Criticism and Scan Statistic}
\author{Ery Arias-Castro \and Andrew Ying}
\date{}
\maketitle

\blfootnote{Both authors are with the Department of Mathematics, University of California, San Diego, USA.  Contact information is available \href{http://math.ucsd.edu/\~eariasca}{here} and \href{http://www.math.ucsd.edu/people/graduate-students/}{here}.  This work was partially supported by a grant from the US Office of Naval Research (N00014-13-1-0257).}

\begin{abstract}
We consider the problem of detecting a sparse mixture as studied by Ingster (1997) and Donoho and Jin (2004).  We consider a wide array of base distributions.  In particular, we study the situation when the base distribution has polynomial tails, a situation that has not received much attention in the literature.  Perhaps surprisingly, we find that in the context of such a power-law distribution, the higher criticism does not achieve the detection boundary.  However, the scan statistic does. 
\end{abstract}

\section{Introduction} \label{sec:intro}
We consider the problem of detecting a sparse mixture.  A simple variant of the problem can be formulated as follows.  Let $F$ be a continuous distribution function on the real line, and $\eps \in (0,1/2]$ and $\mu > 0$.
The problem is to test
\beq\label{h0}
\hyp_0^n: X_1, \dots, X_n \iid F,
\eeq
versus
\beq\label{h1}
\hyp_1^n: X_1, \dots, X_n \iid (1-\eps) F(\cdot) + \eps F(\cdot - \mu).
\eeq

Mixtures models such as in \eqref{h1} have been considered for quite some time, particularly in the context of robust statistics, where they are known as contamination models \citep[Eq 1.22]{huber2009robust}.

Rather, our contribution is in line with the testing problems studied by \citet{ingster1997some} in the context of the normal sequence model, where $F$ above corresponds to the standard normal distribution.
In that setting, Ingster considered the following parameterization
\beq\label{param}
\eps = \eps_n = n^{-\beta}, \qquad
\mu = \mu_n = \sqrt{2 r \log n},
\eeq
for some $\beta > 0$ and $r > 0$.  
The advantage of this parameterization is that, holding $\beta$ and $r$ fixed, the situation admits a relatively simple description.  
Indeed, since both the null and the alternative hypotheses are simple, by the Neyman-Pearson Lemma, the likelihood ratio test (set at level $\alpha$) is most powerful.  Ingster studied the large-sample behavior of this test procedure and discovered that, in the case where $\beta > 1/2$, when $r < \rho(\beta)$, the test is powerless in the sense of achieving power $\alpha$, while when $r > \rho(\beta)$, the test was fully powerful in the sense of achieving power 1, where the function $\rho$ is given by
\beq \label{lower}
\rho(\beta) := \begin{cases} 
\beta - 1/2, & 1/2 < \beta \leq 3/4, \\ 
(1 - \sqrt{1 - \beta})^2, & 3/4 < \beta < 1.
\end{cases}
\eeq
Thus the existence of a detection boundary in the $(\beta, r)$ plane given by $r = \rho(\beta)$.
In such a situation, we will say that a test procedure `achieves the detection boundary', or is `first-order optimal' (or simply `optimal'), if it is fully powerful when $r > \rho(\beta)$.

Such detection boundaries where derived for other models, for example, in \citep{cai2014optimal,cai2011optimal,donoho2004higher}.
We also mention that the situation where $\beta \le 1/2$ is also well-understood, but quite different, and will not be considered here.  Most of the literature has focused on the more interesting setting where $\beta > 1/2$ and we do the same here.  

\subsection{Threshold tests}
After determining what one can hope for, it becomes of interest to understand what one can achieve with less information.  Indeed, the likelihood ratio test requires knowledge of all the quantities and objects defining the testing problem, in this case $(F,\eps,\mu)$, and even in the present stylized setting we might want to know what can be done when some of this information is missing, in particular what defines the alternative, namely $(\eps,\mu)$.  
(The case where $F$ is also unknown has attracted much less attention.  We discuss it in \secref{discussion}.)

When $F$ is known, the problem is that of goodness-of-fit testing, albeit with alternatives of the form \eqref{h1} in mind.
\citet{donoho2004higher} opened this investigation with the analysis of various tests, including the max test based on $\max_i X_i$ and a variant of the Anderson-Darling test \citep{anderson1952asymptotic}.  
Seeing as a problem of multiple testing based on p-values defined as $U_i = 1 - F(X_i)$, the max test coincides with the Tippett-\v{S}id\'ak test combination test, while the Anderson-Darling test coincides with a proposal by Tukey called the higher criticism (HC).
More recently, \citet{moscovich2016exact} analyzed a goodness-of-fit (BJ) test proposed by \citet{berk1979goodness} in the same setting.
For $t \in \bbR$, define 
\beq\label{counts}
N_n(t) = \#\{i : X_i \ge t\}.
\eeq
We note that, under the null hypothesis, $N_n(t)$ is binomial with parameters $(n, 1-F(t))$, which motivates the test that rejects for large values of 
\beq
\sup_{t : F(t) \le 1/2} \frac{N_n(t) - n (1-F(t))}{\sqrt{n F(t) (1-F(t)) + 1}}.
\eeq
This is one of many possible variants of HC.\footnote{\quad 
The constraint `$F(t) \le 1/2$ can be replaced by $F(t) \le \gamma$, where $\gamma$ can be taken to be smaller, say $\gamma = 0.05$.  
The `$+1$' in the denominator is roughly equivalent to adding the constraint $F(t) \ge 1/n$, which \citet{donoho2004higher} recommend for reasons of stability.  In any case, this variant performs as well (to first order) as any other variant of HC considered in the literature, at least in all the regimes commonly considered.}  

Let $U_{(1)} \le \cdots \le U_{(n)}$ denote the ordered $U_i$'s.  
We note that, under the null hypothesis, $U_{(i)}$ has the beta distribution with parameters $(i, n-i+1)$, which motivates the definition of BJ, rejecting for small values of 
\beq\label{BJ}
\min_{i \in [n]} P_i,
\eeq
where $P_i := \Beta(U_{(i)}; i, n-i+1)$ and $\Beta(\cdot; a,b)$ denotes the distribution function of the beta distribution with parameters $(a,b)$.

The verdict is the following.  
In the normal setting, HC and BJ achieve the detection boundary in the full range $\beta > 1/2$, while the max test is only able to achieve the detection in the upper half of the range $\beta > 3/4$.
The same extends to other models, in particular to generalized Gaussian models where $F$ has density proportional to $\exp(-|x|^a/a)$ for some $a > 1$.  (The case $a \le 1$ is qualitatively different.  HC and BJ are still first-order optimal while the max test is suboptimal everywhere.)

These tests are all threshold tests, where we define a threshold test as any test with a rejection region of the form
$\bigcup_{t \in \cT} \{N_n(t) \ge c_t\},$
for some subset $\cT \subset \bbR$ and some critical values  $c_t > 0$.
More broadly, any combination test that we know of that is discussed in the multiple-testing literature is a threshold test.  (This includes the tests proposed by Fisher, Lipt\'ak-Stouffer, Tippett-\v{S}id\'ak, Simes, and more.)
Thus it might be of interest to understand what can be achieved with a threshold test.  In this regard, it is useful to examine how one would optimize such an approach if one had perfect knowledge of the model.  
Let $\phi_t$ denote the test with rejection region $\{N_n(t) \ge c_t\}$, where 
\beq\label{critical}
c_t := \min\big\{c \ge 0: \P_0(N_n(t) \ge c) \le \alpha\big\}.
\eeq
We define the oracle threshold test as the test $\phi_{t*}$, where 
\beq\label{threshold}
t_* := \argmax_{t \in \bbR} \P_1(N_n(t) \ge c_t),
\eeq
with $\bbP_0$ denoting the distribution under the null \eqref{h0} and $\bbP_1$ that under the alternative \eqref{h1}.
(Here and elsewhere, $\alpha$ denotes the desired significance level.)
Note that computing $c_t$ only requires knowledge of $F$, while computing $t_*$ requires knowledge of the entire model, namely $(F,\eps,\mu)$.  
Thus the construction of the test $\phi_{t*}$ relies on the oracle knowledge of $(\eps,\mu)$.

\subsection{Scan tests}
Detection problems arise in a variety of contexts and in very many applications.  An important example is in spatial statistics (itself a rather wide area), where the detection of `hot spots', meaning areas of unusually high concentration, has been considered for quite some time \citep{kulldorff1997spatial}.
An early contribution to this literature is that of \citet{naus1965distribution}, who considered the distribution of the maximum number of points in an interval of given length (say $\ell$) when the points are drawn iid from the uniform distribution on $[0,1]$.  This would nowadays be referred to as the scan statistic and arises when testing the null that the points are uniformly distributed in $[0,1]$ against the (composite) alternative that there is an sub-interval of length $\ell$ with higher intensity.  Settings where sub-interval length is unknown have been considered \citep{arias2005near}.

For $s \le t$, define $N_n[s,t] = \# \{i: X_i \in [s,t]\}$ and $F[s,t] = F(t) - F(s)$. 
We note that, under the null hypothesis, $N_n[s,t]$ is binomial with parameters $(n, F[s,t])$, which motivates the test that rejects for large values of 
\beq\label{HC_scan}
\sup_{(s,t) : F[s,t] \le 1/2} \frac{N_n[s,t] - n F[s,t]}{\sqrt{n F[s,t] (1 -F[s,t]) + 1}}.
\eeq
Although there are many possible variants, this is the one we will be working with.  

We note that, under the null hypothesis, for any pair of indices $i < j$, $U_{(j)} - U_{(i)}$ has the beta distribution with parameters $(j-i, n-j+i+1)$ --- see \citep[Th 11.1]{gibbons2011nonparametric}.  This motivates the definition of the scan test which rejects for small values of 
\beq \label{BJ_scan}
\min_{1 \le i < j \le n} P_{i,j},
\eeq
where $P_{i,j} := \Beta(U_{(j)} - U_{(i)}; j-i, n-j+i+1)$.

In general, we define a scan test as any test with region rejection of a the form $\bigcup_{(s,t) \in \cK} \{N_n[s,t] \ge c_{s,t}\}$, where $\cK$ is a subset of $\{(s,t) : s < t\}$ and $c_{s,t} \ge 0$ are critical values.  
Let $\phi_{s,t}$ denote the test with rejection region $\{N_n[s,t] \ge c_{s,t}\}$, where 
\beq\label{critical-pair}
c_{s,t} := \min\big\{c \ge 0 : \P_0(N_n[s,t] \ge c) \le \alpha\big\}.
\eeq
We define the oracle scan test as the test $\phi_{s_\bullet,t_\bullet}$, where 
\beq\label{scan}
(s_\bullet,t_\bullet) := \argmax_{s < t} \P_1(N_n[s,t] \ge c_{s,t}).
\eeq
Indeed, $\phi_{s_\bullet,t_\bullet}$ relies on oracle knowledge of $(\eps, \mu)$.

To the best of our knowledge, this is the first time that such tests are considered in the line of work that concerns us here with roots in the work of \citet{ingster1997some} and \citet{donoho2004higher}.
The main reason for considering these tests in the present context is that they happen to be first-order optimal, not only in the models considered in the literature (such as generalized Gaussian), but also in power-law models where $F$ has fat tails (e.g., Cauchy or Pareto), whereas  threshold tests fail are suboptimal for such models.  

\subsection{Content}
For simplicity and the sake of clarity, we will focus on oracle-type, rather than likelihood ratio, performance bounds.  The former are indeed more transparent and can be obtained under more generality and with simpler arguments.  Also our main intention here is to compare what can be achieved with threshold tests compared to the more general scan tests, defined next, and comparing the corresponding oracle tests seems more appropriate.  

In \secref{oracle}, we study the oracle threshold test and the oracle scan test.  We then consider a number of models.
In \secref{scan}, we consider the two scan tests described above and compare them to the oracle scan test.
In \secref{numerics}, we present the result of some numerical experiments that illustrate our theory.
We briefly discuss the performance of the likelihood ratio test and that of nonparametric approaches in \secref{discussion}.

\section{Oracle threshold test and oracle scan test} \label{sec:oracle}

In this section we state and prove some basic results for the oracle threshold and oracle scan tests.

\subsection{Power monotonicity}
It is natural to guess that the testing \eqref{h0} versus \eqref{h1} becomes easier as the shift $\mu$ increases.  This is indeed the case, at least from the point of view of both oracle tests.

\begin{prp} \label{prp:threshold_monotonicity}
The oracle threshold test has monotonic power in the shift.
\end{prp}

\begin{proof}
We assume that $\eps > 0$ is fixed and let $\P_\mu$ denote the data distribution under the alternative \eqref{h1}.  Take $\mu_1 \le \mu_2$ and let $t_k$ denote the oracle threshold \eqref{threshold} for $\mu_k$, so that the oracle test for $\mu_k$, meaning $\phi_{t_k}$, has rejection region $\{N_n(t_k) \ge c_{t_k}\}$ and power $\pi_k := \P_{\mu_k}(N_n(t_k) \ge c_{t_k})$.  Thus we need to show that $\pi_1 \le \pi_2$.  This is so because of the fact that, for any $t$, $N_n(t)$ is stochastically non-decreasing in $\mu$, leading to
\beq
\pi_1 
= \P_{\mu_1}(N_n(t_1) \ge c_{t_1})
\le \P_{\mu_2}(N_n(t_1) \ge c_{t_1})
\le \P_{\mu_2}(N_n(t_2) \ge c_{t_2})
= \pi_2,
\eeq
where the last inequality is by construction of $t_2$ and $c_2$.
\end{proof}

Clearly, the oracle scan test has at least as much power as the oracle threshold test.  Interestingly, it does not have monotonic power in general, although it does under some natural assumptions on the base distribution.  

\begin{prp} \label{prp:scan_monotonicity}
Assume that $F$, as a distribution, is unimodal.  Then the oracle scan test has monotonic power in the shift.
\end{prp}

\begin{proof}
We stay with the setting and notation introduced in the proof of \prpref{threshold_monotonicity}.  Let $d \ge 0$ be smallest such that  
\beq
F[s_1 + d, t_1 + \mu_2 - \mu_1] = F[s_1, t_1].
\eeq
The fact that $F$, as a distribution, is unimodal implies that $d \le \mu_2 - \mu_1$.  
Now, under the null, by construction, 
\beq
\P_0(N_n[s_1 + d, t_1 + \mu_2 - \mu_1] \ge c_{s_1, t_1}) = \P_0(N_n[s_1, t_1] \ge c_{s_1, t_1}) \le \alpha.
\eeq
On the other hand, under $\P_{\mu_1}$, $N_n[s_1, t_1]$ is binomial with parameters $n$ and $q_1 := (1-\eps)F[s_1, t_1] + \eps F[s_1-\mu_1, t_1-\mu_1]$, while under $\P_{\mu_2}$, $N_n[s_1 + d, t_1 + \mu_2 - \mu_1]$ is binomial with parameters $n$ and 
\begin{align*}
q_2 
&:= (1-\eps)F[s_1 + d, t_1 + \mu_2 - \mu_1] + \eps F[s_1 + d -\mu_2, t_1 + \mu_2 - \mu_1 -\mu_2] \\
&= (1-\eps)F[s_1, t_1] + \eps F[s_1 + d -\mu_2, t_1 - \mu_1] \\
&\ge q_1,
\end{align*}
using the fact that $d \le \mu_2 - \mu_1$.
This explains the first inequality in the following derivation
\begin{align*}
\pi_1 
&= \P_{\mu_1}(N_n[s_1, t_1] \ge c_{s_1, t_1}) \\
&\le \P_{\mu_2}(N_n[s_1 + d, t_1 + \mu_2 - \mu_1] \ge c_{s_1, t_1}) \\
&\le \P_{\mu_2}(N_n[s_2, t_2] \ge c_{s_2, t_2}) 
= \pi_2,
\end{align*}
and the second inequality is by definition of $(s_2, t_2)$.
\end{proof}

\subsection{Performance bounds}
We now provide necessary and sufficient conditions for the the oracle threshold test and the oracle scan test to be fully powerful in the large-sample limit ($n\to\infty$).
We focus on the case where 
\beq
n \eps_n \to \infty, \qquad 
\sqrt{n} \eps_n \to 0,
\eeq
where the first condition implies that, under the alternative, the sample is indeed contaminated with probability tending to 1, while the second condition puts us in the regime corresponding to $\beta > 1/2$ under Ingster's parameterization \eqref{param}.

Our analysis below is based on the following simple result, which is an immediate consequence of Chebyshev's inequality and the central limit theorem.

\begin{lem}
\label{lem:binom}
Suppose that we are testing $N \sim \Bin(n, p_n)$ versus $N \sim \Bin(n, q_n)$ where $p_n \le 1/2$ and $p_n \le q_n$, and consider the test at level $\alpha$ that rejects for large values of $N$ --- which is the most powerful test.  It is asymptotically powerful if $n (q_n - p_n)^2/q_n \to \infty$, while it is asymptotically powerless if $n (q_n - p_n)^2/p_n \to 0$.
\end{lem}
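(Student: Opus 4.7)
I would treat the two conclusions separately.

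\emph{Asymptotically powerful direction.} My plan is to exhibit a concrete level-$\alpha$ test with power tending to $1$; the most powerful level-$\alpha$ test then dominates it. Take the midpoint threshold
\beq
c_n := \tfrac{1}{2}(n p_n + n q_n).
\eeq
Chebyshev's inequality, applied under each hypothesis, gives
\beq
\P_0(N \ge c_n) \le \frac{4 p_n (1 - p_n)}{n(q_n - p_n)^2} \le \frac{4 p_n}{n(q_n - p_n)^2}, \qquad \P_1(N < c_n) \le \frac{4 q_n}{n(q_n - p_n)^2}.
\eeq
Using $p_n \le q_n$, the hypothesis $n(q_n - p_n)^2/q_n \to \infty$ sends both bounds to zero, finishing the argument.

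\emph{Asymptotically powerless direction.} Here my plan is to show that the total variation distance between $\P_0 = \Bin(n, p_n)$ and $\P_1 = \Bin(n, q_n)$ vanishes; this immediately implies that every level-$\alpha$ test has power at most $\alpha + o(1)$. I would use the closed-form chi-squared divergence for product Bernoullis,
\beq
\chi^2(\P_1 \,\|\, \P_0) = \Big(1 + \frac{(q_n - p_n)^2}{p_n(1 - p_n)}\Big)^n - 1.
\eeq
Since $p_n \le 1/2$ forces $1 - p_n \ge 1/2$, the assumption $n(q_n - p_n)^2/p_n \to 0$ yields $n(q_n - p_n)^2/(p_n(1 - p_n)) \to 0$, whence $\chi^2(\P_1 \| \P_0) \to 0$ and then $d_{\mathrm{TV}}(\P_0, \P_1) \le \tfrac{1}{2}\sqrt{\chi^2(\P_1 \| \P_0)} \to 0$ via the standard Cauchy--Schwarz bound.

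I do not anticipate any serious obstacle: the hypotheses in the two directions are calibrated precisely so that the Chebyshev and $\chi^2$ bounds are sharp to leading order. A more hands-on alternative, closer to the CLT hint in the statement, would be to replace both binomials by their Gaussian approximations and compute the asymptotic power of the Neyman--Pearson test directly; that route is equally valid but requires splitting the case $n p_n \to \infty$ (Gaussian regime) from the case $n p_n = O(1)$ (Poisson regime), so the second-moment/$\chi^2$ route is more uniform and slightly shorter.
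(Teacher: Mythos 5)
Your proposal is correct. The paper gives no written proof of this lemma---it is dispatched as ``an immediate consequence of Chebyshev's inequality and the central limit theorem''---so the comparison is with that sketch. Your powerful direction is exactly the intended Chebyshev argument: a midpoint threshold, variance bounds on both sides, and the observation that $p_n \le q_n$ lets both error probabilities be controlled by $q_n/(n(q_n-p_n)^2)$; the level-$\alpha$ most powerful test then dominates the exhibited test since its critical value is eventually no larger than $c_n$. For the powerless direction you replace the paper's appeal to the CLT with a $\chi^2$--total-variation computation. This is a genuinely different route and, as you note, a more uniform one: it avoids having to separate the Gaussian regime $np_n \to \infty$ from the Poisson regime $np_n = O(1)$, and it proves the stronger statement that \emph{every} test (not just the one rejecting for large $N$) is asymptotically powerless, which is in fact the form needed if one wants the ``powerless'' halves of the downstream propositions to rule out all threshold tests at once. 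One pedantic point worth a clause in the write-up: the displayed identity is the $\chi^2$ divergence between the two $n$-fold products of Bernoullis, not between the two binomial laws themselves; you pass to the binomials via the data-processing inequality (the sum is a measurable function of the vector), which only helps since it can shrink both $\chi^2$ and total variation. With that noted, the chain $n(q_n-p_n)^2/p_n \to 0 \Rightarrow \chi^2 \to 0 \Rightarrow d_{\mathrm{TV}} \to 0 \Rightarrow$ power $\le \alpha + o(1)$ is airtight.
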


Using \lemref{binom}, we easily obtain performance guarantees for the oracle threshold test and the oracle scan test.

\begin{prp}
\label{prp:threshold}
The oracle threshold test is powerful if there is a sequence of thresholds $(t_n)$ such that 
\beq\label{prp-threshold1}
n \eps_n \bar F(t_n - \mu_n) \to \infty, \quad \text{and} \quad
n \eps_n^2 \bar F(t_n - \mu_n)^2/\bar F(t_n) \to \infty.
\eeq
It is powerless if for any sequence of thresholds $(t_n)$
\beq\label{prp-threshold2}
n \eps_n^2 \bar F(t_n - \mu_n)^2/\bar F(t_n) \to 0.
\eeq
\end{prp}

\begin{proof}
Let $(t_n)$ denote a sequence of thresholds satisfying \eqref{prp-threshold1}, and define $p_n = \bar F(t_n)$ and $q_n = (1-\eps_n) \bar F(t_n) + \eps_n \bar F(t_n - \mu_n)$.
We know that $N_n(t_n) \sim \Bin(n, p_n)$ under the null and $N_n(t_n) \sim \Bin(n, q_n)$ under the alternative, with 
\begin{align*}
n (q_n - p_n)^2/q_n
&= \frac{n \eps_n^2 (\bar F(t_n - \mu_n) - \bar F(t_n))^2}{(1-\eps_n) \bar F(t_n) + \eps_n \bar F(t_n - \mu_n)}.
\end{align*}
If the second part of \eqref{prp-threshold1} holds, then necessarily $\bar F(t_n - \mu_n) \gg \bar F(t_n)$, since
\beq
n \eps_n^2 \bar F(t_n - \mu_n)^2/\bar F(t_n) 
= \big[n \eps_n^2 \bar F(t_n)\big] \big[\bar F(t_n - \mu_n)/\bar F(t_n)\big]^2
\le (n \eps_n^2) \big[\bar F(t_n - \mu_n)/\bar F(t_n)\big]^2,
\eeq
with $n \eps_n^2 = o(1)$ by assumption.
Hence,
\begin{align*}
n (q_n - p_n)^2/q_n
&\sim \frac{n \eps_n^2 \bar F(t_n - \mu_n)^2}{(1-\eps_n) \bar F(t_n) + \eps_n \bar F(t_n - \mu_n)} \\
&\asymp n \eps_n \bar F(t_n - \mu_n) \bigwedge n \eps_n^2 \bar F(t_n - \mu_n)^2/\bar F(t_n).
\end{align*}
Therefore, by \lemref{binom}, the sequence of tests $(\phi_{t_n})$ has full power in the limit when \eqref{prp-threshold1} holds.

Now let $(t_n)$ be any sequence of thresholds and consider the sequence of tests $(\phi_{t_n})$.  By \lemref{binom}, it has power $\alpha$ in the limit since 
\beq\label{prp-threshold-proof2}
n (q_n - p_n)^2/p_n 
\le n \eps_n^2 \bar F(t_n - \mu_n)^2/(1-\eps_n)\bar F(t_n)\to 0,
\eeq
where the convergence to $0$ comes from \eqref{prp-threshold2}.
\end{proof}

\begin{rem}\label{rem:prp-threshold1-simple}
Note that the first part of \eqref{prp-threshold1} may be replaced by 
\beq\label{prp-threshold1-simple}
n \bar F(t_n) \to \infty.
\eeq
This is because this and $n \eps_n^2 \bar F(t_n - \mu_n)^2/\bar F(t_n) \to \infty$ implies $n \eps_n \bar F(t_n - \mu_n) \to \infty$.
\end{rem}

\begin{prp}
\label{prp:scan}
The oracle scan test is powerful if there is a sequence of intervals $([s_n,t_n])$ such that 
\beq\label{prp-scan1}
n \eps_n F[s_n -\mu_n, t_n -\mu_n] \to \infty, \quad \text{and} \quad
n \eps_n^2 F[s_n -\mu_n, t_n -\mu_n]^2/F[s_n,t_n] \to \infty.
\eeq
It is powerless if for any sequence of intervals $([s_n,t_n])$
\beq
n \eps_n^2 F[s_n -\mu_n, t_n -\mu_n]^2/F[s_n,t_n] \to 0.
\eeq
\end{prp}

The proof is completely parallel to that of \prpref{threshold} and is omitted.

\subsection{Examples: generalized Gaussian models and more} \label{sec:generalized_Gaussian}
We look at a number of models and in each case derive the performance of the oracle threshold and oracle scan tests, and compare that with the performance of the likelihood ratio test.  

To place the results in line with the literature on the topic, we adopt Ingster's parameterization \eqref{param} for $\eps_n$, in fact a softer version of that
\beq\label{eps}
\eps = \eps_n \sim n^{-\beta},
\eeq
for some fixed $\beta$.  The parameterization of $\mu = \mu_n$ will depend on on the model.

To further simplify matters, we assume throughout that 
\beq\label{varphi}
\log \bar F(x) \sim -\varphi(x),
\eeq
where $\varphi(x)$ is continuous and strictly increasing for $x$ large enough.   
In that case, in view of \remref{prp-threshold1-simple}, we note that \eqref{prp-threshold1} is satisfied when 
\beq\label{prp-threshold1-1}
\log n - \varphi(t_n) \to \infty, \qquad 
(1 - 2\beta)\log n + \varphi(t_n) - 2\varphi(t_n - \mu_n) \to \infty.
\eeq

\subsubsection{Extended generalized Gaussian}
This class of models is defined by the property that $\varphi$ satisfies\footnote{ It is tempting to consider a more general condition where there is a function $\omega$ on $\bbR_+$ such that $\lim_{t\to\infty}\varphi(u t)/\varphi(t) \to \omega(u)$ for all $u \ge 0$.  However, as long as $\omega$ is not constant (equal to zero in that case), it can easily be shown that $\omega(u) = u^a$ for some $a > 0$.}
\beq \label{aggcondition}
\varphi(u t)/\varphi(t) \to u^a, \quad t \to \infty, \quad \forall u \ge 0.
\eeq
Here $a > 0$ parameterizes this class of models.
This covers the generalized Gaussian models, which are often used as benchmarks in this line of work.  It also covers the case where $\varphi(t) \sim t^a (\log t)^b$ where $b \in \bbR$ is arbitrary.

For $a > 1$, define 
\beq \label{gamma>1}
\rho_a(\beta) = \begin{cases} 
(2^{1/(a -1)} - 1)^{a -1} (\beta - 1/2), & 1/2 < \beta < 1 - 2^{-a/(a -1)}, \\ 
(1 - (1-\beta)^{1/a})^{a}, & 1 - 2^{-a/(a -1)} \le \beta < 1.
\end{cases}
\eeq
For $a \le 1$, define
\beq \label{gamma<1}
\rho_a(\beta) = 2 (\beta - 1/2).
\eeq
In addition to \eqref{eps}, assume that 
\beq\label{mu1}
\mu = \mu_n \text{ satisfies } \varphi(\mu_n) \sim r \log n, \text{ with $r \ge 0$ fixed}.
\eeq

\begin{prp}\label{prp:generalized_gaussian}
The curve $r = \rho_a(\beta)$ in the $(\beta,r)$ plane is the detection boundary that the oracle threshold test achieves.
\end{prp}

\begin{proof}
We focus on proving that the oracle threshold test achieves that boundary.  A simple inspection of the arguments reveal that they are tight, so that this is the precise detection boundary that the test achieves.  (See the proof of \prpref{oracle_threshold_power_law} for an example.)

We divide the proof into several cases.

\medskip\noindent
{\em Case 1: $a > 1$.}  
Define $b = 2^{-1/(a - 1)}$ and note that $0 < b < 1$.  

\medskip\noindent
{\em Case 1.1: $1/2 < \beta < 1 - b^{a}$ and $r > \rho_a(\beta)$.}
Under these conditions, $\beta < 1/2 + r(1/b - 1)^{-(a-1)}$, and in particular there is $\eta > 0$ such that 
\beq\label{beta-ub1}
1 - 2\beta \ge -2r(1/b - 1)^{-(a-1)} + \eta.
\eeq
Setting $t_n = (1 - b)^{-1}\mu_n$, by \eqref{aggcondition} and \eqref{mu1}, we have the following
\begin{align}
\varphi(t_n - \mu_n) 
&= \big(r b^a/(1-b)^a + o(1)\big) \log n, \\
\varphi(t_n)
&= \big(r/(1 - b)^a + o(1)\big) \log n.
\end{align}
By \prpref{threshold_monotonicity} we may focus on $r$ small enough that $r/(1-b)^a < 1$.  This is possible because $\rho_a(\beta) < (1-b)^a$ when $\beta < 1 - b^a$, which we assume here.  (This can be easily verified using the definition of $b$.)
Assuming that $r$ is as such, the first part of \eqref{prp-threshold1-1} is satisfied.  
For the second part, with \eqref{beta-ub1}, we have
\begin{align*}
&(1 - 2\beta)\log n - 2\varphi(t_n - \mu_n) + \varphi(t_n) \\
&\ge \big[-2r (1/b - 1)^{-(a-1)} +\eta -2 r b^a/(1-b)^a + r/(1-b)^a +o(1) \big] \log n \\
&=[\eta + o(1)] \log n \to \infty,
\end{align*}
using the definition of $b$ and simplifying.
Thus the second part of \eqref{prp-threshold1-1} is also satisfied and the oracle threshold test is powerful.

\medskip\noindent
{\em Case 1.2: $1 - b^{a} \le \beta < 1$ and $r > \rho_a(\beta)$.}
Under these conditions, we have $1 - \beta > (1 - r^{1/a})^a$,  and in particular there is $\eta > 0$ such that 
\beq\label{beta-ub2}
1 - \beta - \eta \ge (1 - r^{1/a})^a \ge ((1 - \eta)^{1/a} - r^{1/a})^a. 
\eeq
Set $t_n = (\frac{1}{r}(1  - \eta))^{1/a}\mu_n$, we have the following
\begin{align}
\varphi(t_n - \mu_n) 
&= \big((1 - \eta)^{1/a} - r^{1/a})^a + o(1)\big) \log n, \\
\varphi(t_n)
&= (1 - \eta + o(1)) \log n.
\end{align}
By looking at the speed of $\varphi(t_n)$, the first part of \eqref{prp-threshold1-1} is satisfied immediately. For the second part, with \eqref{beta-ub1}, we have
\begin{align*}
&(1 - 2\beta)\log n - 2\varphi(t_n - \mu_n) + \varphi(t_n) \\
&= (1 - 2\beta)\log n - 2\big((1 - \eta)^{1/a} - r^{1/a})^a + o(1)\big) \log n + (1 - \eta + o(1))\log n \\
&= 2 \big[1 - \beta -\eta/2 - ((1 - \eta)^{1/a} - r^{1/a})^a + o(1)\big] \log n
\to \infty.
\end{align*}
Thus the second part of \eqref{prp-threshold1-1} is also satisfied and the oracle threshold test is powerful.

\medskip\noindent
{\em Case 2: $a \le 1$.}  
By \prpref{threshold_monotonicity} we may restrict attention to the case where $2 \beta - 1 < r < 1$.  
Here we set $t_n = \mu_n$.
Then the first part in \eqref{prp-threshold1-1} is clearly satisfied. For the second part, notice that
\begin{align*}
&(1 - 2\beta)\log n - 2\varphi(t_n - \mu_n) + \varphi(t_n) \\
&=(1 - 2\beta)\log n + (r + o(1)) \log n \\
&=[1 - 2\beta + r + o(1)] \log n \to \infty. \qedhere
\end{align*}
\end{proof}

Thus, although the conditions are much more general here, the detection boundary is the same as in the corresponding generalized Gaussian model and, moreover, the oracle threshold test achieves that boundary.

\begin{rem}[max test]
In this class of models, it can be shown that the max test  achieves the detection boundary over the upper range, meaning when $\beta \ge 1 - 2^{-a/(a -1)}$.  In fact, $\rho^{\rm max}(\beta) := (1 - (1-\beta)^{1/a})^{a}$ defines the detection boundary for the max test.
\end{rem}

\subsubsection{Other models}
In the next few classes of models, $\varphi$ satisfies
\beq\label{omega}
\frac{\varphi^{-1}(t) - \varphi^{-1}(v t)}{\lambda(t)} \to \omega(v), \quad t \to \infty, \quad \forall v \in (0,1].
\eeq
for some functions $\lambda$ and $\omega$, with the latter being non-increasing, continuous, and such that $\omega(1) = 0$.  This is actually also the case when $\varphi(t) \sim t^a (\log t)^b$ with $a > 0$ and $b \in \bbR$, with $\lambda(t) = t^{1/a} (\log t)^{-b/a}$ and $\omega(v) = (1 - v^{1/a})/a^{b/a}$.

Define
\beq\label{rho1}
\rho(\beta) = \inf_{0 < h < 1 - \beta} \, \big[\omega(h) - \omega(2\beta -1 + 2h)\big].
\eeq
In addition to \eqref{eps}, assume that 
\beq
\mu = \mu_n \sim r \lambda(\log n), \quad r \ge 0 \text{ fixed}.
\eeq

\begin{prp}\label{prp:generalized_gaussian_other}
The curve $r = \rho(\beta)$ in the $(\beta,r)$ plane is the detection boundary that the oracle threshold test achieves.
\end{prp}

\begin{proof}
We focus on proving that the oracle threshold test achieves that boundary.  

Since $\omega(v)$ is continuous, we may define 
\beq
h^* = \argmin_{0 \le h \le 1 - \beta} \, \big[\omega(h) - \omega(2\beta -1 + 2h)\big].
\eeq
We focus on the case where $h^* < 1 -\beta$.  In the case where $h^* = 1 -\beta$, the max test is powerful (\remref{max_test2}), and therefore so is the oracle threshold test.  
By \prpref{threshold_monotonicity} we may focus on the case where $r < \omega(h^*)$.  
With these assumptions and the fact that $\omega(h^*) - \omega(2\beta -1 + 2h^*) = \rho(\beta) < r$, there is $\eta > 0$ be such that 
\beq\label{proof_generalized_gaussian_other1}
2\beta - 1 + 2h^* + 2\eta < 1,
\eeq
and
\beq
\omega(h^*) - \omega(2\beta -1 + 2h^* +\eta) 
< r 
< \omega(h^*) - \omega(2\beta -1 + 2h^* +2\eta).
\eeq

Define $t_n := \mu_n +\varphi^{-1}(h^*\log n)$.  
Using \eqref{omega} multiple times, for $n$ sufficiently large, we have the following
\begin{align*}
\mu_n 
&= (r +o(1)) \lambda(\log n)\\
&\le [\omega(h^*) - \omega(2\beta - 1+ 2h^*+ 2\eta)]\lambda(\log n)\\
&= \varphi^{-1}(\log n) - \varphi^{-1}(h^*\log n) - \varphi^{-1}(\log n) + \varphi^{-1}((2\beta - 1+ 2h^* +2\eta)\log n)\\
&= \varphi^{-1}((2\beta - 1+ 2h^* +2\eta)\log n)  -\varphi^{-1}(h^*\log n).
\end{align*}
Hence, eventually, $t_n \le \varphi^{-1}((2\beta - 1+ 2h^* +2\eta)\log n)$, implying that
\begin{align*}
\log n - \varphi(t_n) 
= \log n -(2\beta - 1+ 2h^* +2\eta)\log n
= [1 -(2\beta -2 +2h^* +2\eta)] \log n
\to \infty,
\end{align*}
using \eqref{proof_generalized_gaussian_other1}.
Thus the first part of \eqref{prp-threshold1-1} is satisfied.

Similarly, for $n$ sufficiently large,
\begin{align*}
\mu_n 
&= (r +o(1)) \lambda(\log n)\\
&\ge [\omega(h^*) - \omega(2\beta - 1+ 2h^*+ \eta)]\lambda(\log n)\\
&= \varphi^{-1}((2\beta - 1+ 2h^* +\eta)\log n)  -\varphi^{-1}(h^*\log n),
\end{align*}
so that, eventually, $t_n \ge \varphi^{-1}((2\beta - 1+ 2h^*+\eta)\log n)$, implying that
\begin{align*}
&(1 - 2\beta)\log n -2\varphi(t_n - \mu_n) +\varphi(t_n) \\
&\ge (1 - 2\beta)\log n -2 h^* \log n + (2\beta - 1 + 2h^* + \eta)\log n\\
&= \eta \log n \to \infty.
\end{align*}
Thus the second part of \eqref{prp-threshold1-1} is satisfied.
\end{proof}

\begin{rem}[max test] \label{rem:max_test2}
In the present situation, it can be shown that $\rho^{\rm max}(\beta) := \omega(1-\beta)$ defines the detection boundary for the max test.
\end{rem}

\subsubsection{Extended generalized Gumbel}
This class of models is defined by $\varphi(t) = \exp(t^a)$ for some $a > 0$, which satisfies \eqref{omega} with $\lambda(t) = \tfrac1a (\log t)^{1/a-1}$ and $\omega(v) = \log(1/v)$.
In this case, 
\beq
\mu = \mu_n \sim \frac{r}a (\log \log n)^{1/a-1},
\eeq
and the detection boundary is given by $r = - \log(1-\beta)$.
Note that, at the detection boundary, $\mu_n \to \infty$ when $a > 1$; that $\mu_n \asymp 1$ when $a = 1$; and $\mu_n \to 0$ when $a < 1$.

\subsubsection{Extended generalized Gumbel}
This class of models is defined by $\varphi(t) = \exp((\log t)^a)$ for some $a > 1$, which satisfies \eqref{omega} with $\lambda(t) = \tfrac1a (\log t)^{1/a-1} \exp((\log t)^{1/a})$ and $\omega(v) = \log(1/v)$.
In this case, 
\beq
\mu = \mu_n \sim \frac{r}a (\log \log n)^{1/a-1} \exp((\log \log n)^{1/a}),
\eeq
and the detection boundary is given by $r = - \log(1-\beta)$ as in the previous class of models (since $\omega$ is the same).

\begin{rem}[max test] \label{rem:max_test3}
Based on \remref{max_test2}, in the last two classes of models, the max test achieves the detection boundary over the whole $\beta$ range.  The same is true, more generally, when the infimum in \eqref{rho1} is at $h = 1-\beta$.
\end{rem}
 
\subsection{Examples: power-law models and more} \label{sec:power_law}
In the next few classes of models, $F$ satisfies
\beq\label{F1}
\log(F(t+v) - F(t)) \sim -\lambda(t), \quad  t \to \infty, \quad \forall v \ge 0,
\eeq
for some function $\lambda$ which is increasing eventually and such that $\lambda(t) \to \infty$ as $t \to \infty$.  
This includes models where 
\beq\label{F2}
\bar F(t) \propto t^{-a} (\log t)^b (1 + o(1/t)), \quad t \to \infty,
\eeq 
with $a > 0$ and $b \in \bbR$, in which case \eqref{F1} holds with $\lambda(t) = (a+1) \log t$.  
It also includes models where $\bar F(t) \propto (\log t)^{-a}(1 + o(1/t\log t))$, with $a > 0$, in which case \eqref{F1} holds with $\lambda(t) = \log t$, as well as other distribution with even slower decay.

In addition to \eqref{eps}, assume that
\beq\label{mu2}
\mu = \mu_n \quad \text{satisfies} \quad \lambda(\mu_n) \sim r \log n, \quad r \ge 0 \text{ fixed}.
\eeq

\begin{prp}\label{prp:power_law}
The curve $r = \rho(\beta) := 2\beta -1$ in the $(\beta,r)$ plane is the detection boundary that the oracle scan test achieves.
\end{prp}

\begin{proof}
We focus on proving that the oracle scan test achieves that boundary.  
  
Fix $r$ such that $r > 2\beta - 1$.  
Consider the interval $[s_n, t_n]$ with $s_n := \mu_n$ and $t_n := \mu_n + v$, where $v > 0$ is such that $F[0,v] > 0$.  
We need to verify that \eqref{prp-scan1} holds.
On the one hand, we have 
\beq
n \eps_n F[s_n -\mu_n, t_n -\mu_n] 
= n^{1 -\beta} F[0, v]
\to \infty,
\eeq
because $\beta < 1$ by assumption.
So the first part of \eqref{prp-scan1} holds.
On the other hand, 
\begin{align*}
&n \eps_n^2 F[s_n -\mu_n, t_n -\mu_n]^2/F[s_n,t_n]
= n^{1-2\beta} F[0,v]^2/n^{r+o(1)}
= n^{r + 1 -2\beta + o(1)} \to \infty,
\end{align*}
since $r > 2\beta - 1$.
So the second part of \eqref{prp-scan1} holds.
\end{proof}

We now show that threshold tests are suboptimal in the main class of models satisfying \eqref{F1}, namely \eqref{F2}.  
(The same happens to be true in other models with fat tails satisfying \eqref{F1}.)  
This is the main motivation for considering scan tests.

\begin{prp} \label{prp:oracle_threshold_power_law}
In a model satisfying \eqref{F2}, and with the same parameterization \eqref{mu2}, the curve $r = (1+1/a)(2\beta -1)$ in the $(\beta,r)$ plane is the detection boundary that the oracle threshold test achieves.
\end{prp}

\begin{proof}
We first prove that the oracle threshold test achieves this detection boundary.  By \prpref{threshold_monotonicity} we may assume that $r < 1+1/a$.  Therefore, fix $r$ such that $(1+1/a) (2\beta -1) < r < 1+1/a$.  
Set the threshold $t_n = \mu_n + v$, where $v$ is such that $\bar F(v) > 0$.  
We need to verify that \eqref{prp-threshold1} holds, and we do so via \remref{prp-threshold1-simple}.
Note that $t_n \sim \mu_n = n^{r/(a+1) + o(1)}$.
In particular,
\beq
n \bar F(t_n) 
\sim n \mu_n^{-a} (\log \mu_n)^b
= n^{1 - a r/(a+1) + o(1)}
\to \infty,
\eeq
and, by the same token,
\beq
n \eps_n^2 \bar F(t_n - \mu_n)^2/\bar F(t_n)
\sim n^{1-2\beta} n^{- a r/(a+1) + o(1)}
= n^{1-2\beta - a r/(a+1) + o(1)}
\to \infty.
\eeq

We now turn to proving that this is the statement boundary is the best that the oracle threshold test can hope for.  
For this, fix $r < (1+1/a) (2\beta -1)$. 
We need to verify \eqref{prp-threshold2}.
Suppose for contradiction that there is a sequence of thresholds, $(t_n)$, such that \eqref{prp-threshold2} does not hold.  By extracting a subsequence if needed, we may assume that
\beq\label{proof_oracle_threshold_power_law1}
n \eps_n^2 \bar F(t_n - \mu_n)^2/\bar F(t_n) \to \lambda \in (0, \infty].
\eeq
First, suppose that $\liminf t_n/\mu_n < \infty$.
Extracting a subsequence if needed, we may assume that $t_n = O(\mu_n)$.  In that case, we have
\beq
n \eps_n^2 \bar F(t_n - \mu_n)^2/\bar F(t_n)
\le n \eps_n^2/\bar F(t_n)
\le n^{1 -2\beta + o(1)} \mu_n^{-a + o(1)}
= n^{1 -2\beta - a r/(a+1) + o(1)}
\to 0.
\eeq
Since this contradicts \eqref{proof_oracle_threshold_power_law1}, we must have $\liminf t_n/\mu_n = \infty$, meaning that $t_n \gg \mu_n$.  In that case, we have $\bar F(t_n - \mu_n) \sim \bar F(t_n)$, implying that
\beq
n \eps_n^2 \bar F(t_n - \mu_n)^2/\bar F(t_n)
\sim n \eps_n^2 \bar F(t_n)
\le n \eps_n^2
\to 0.
\eeq
This also contradicts \eqref{proof_oracle_threshold_power_law1}.
Since there is no other option, it must be that \eqref{proof_oracle_threshold_power_law1} cannot hold.
We conclude that, indeed, \eqref{prp-threshold2} holds for any sequence of thresholds.
\end{proof}

\section{Scan tests} \label{sec:scan}
In this section, we study the scan tests \eqref{HC_scan} and \eqref{BJ_scan}, and show that both of them do as well as the oracle scan test, at least to first-order in the asymptote where $n \to \infty$ and under the various parameterizations used in the previous section.
We refer to \eqref{HC_scan} as the Stouffer scan test, as it is constructed as Stouffer's combination test \citep{stouffer1949american}; while we refer to \eqref{BJ_scan} as the Tippett scan test, for similar reasons \citep{tippett1931methods}.

\subsection{Stouffer scan test}
We study the Stouffer scan test \eqref{HC_scan}.  The main work goes into controlling this statistic under the null hypothesis.  The limiting distribution of higher criticism can be derived from \citep{jaeschke1979asymptotic} and the limiting distributions of some variants of the scan statistic are known under other models \citep{kabluchko2011extremes, sharpnack2016exact}.
We will not pursue such a fine result here, but contend ourselves with a relatively rough upper bound.

\begin{lem} \label{lem:HC_scan}
Given observations $x_1, \dots, x_n$, the maximum in \eqref{HC_scan} is attained at some $(s,t) = (x_i,x_j)$.
\end{lem}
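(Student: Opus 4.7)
The plan is to exploit the piecewise-constant structure of $N_n[s,t]$ in $(s,t)$. Let $x_{(1)} \le \cdots \le x_{(n)}$ denote the order statistics, with the conventions $x_{(0)} := -\infty$ and $x_{(n+1)} := +\infty$. Observing that $N_n[s,t]$ is left-continuous in $s$ and right-continuous in $t$, one sees that for each pair $1 \le k \le l \le n$ the count takes the constant value $N := l - k + 1$ on the half-open cell $R_{k,l} := (x_{(k-1)}, x_{(k)}] \times [x_{(l)}, x_{(l+1)})$, which in particular contains the corner $(x_{(k)}, x_{(l)})$; outside the union of these cells, $N_n[s,t] = 0$ and the statistic is non-positive.

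Next I would reduce to a univariate problem on each cell. On $R_{k,l}$ the statistic depends on $(s,t)$ only through $p := F(t) - F(s)$, namely it equals $g(p) := (N - np)/\sqrt{np(1-p)+1}$. Because $F$ is non-decreasing and continuous, $p$ ranges over an interval as $(s,t)$ varies over the cell, and is minimized precisely at $(s,t) = (x_{(k)}, x_{(l)})$. A direct computation gives
\[
g'(p) = -\frac{n}{(np(1-p)+1)^{3/2}}\Big[np(1-p) + 1 + \tfrac{1}{2}(N - np)(1-2p)\Big],
\]
and the bracket is strictly positive whenever $p \le 1/2$ and $N \ge np$, so $g$ is strictly decreasing on that range.

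To finish, I would note that the supremum in \eqref{HC_scan} is at least $0$ (take $s = t$, so the numerator vanishes), so any maximizer necessarily lies where $N - np \ge 0$. Combined with the feasibility constraint $p \le 1/2$, the monotonicity of $g$ forces the maximum on each cell to occur at the point minimizing $p$, which is the corner $(x_{(k)}, x_{(l)})$; cells where $N = 0$ contribute a non-positive value and may be discarded. Taking the best over the finitely many feasible corners produces a maximizer of the form $(s,t) = (x_i, x_j)$.

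The only subtle point to check is that the corner $(x_{(k)}, x_{(l)})$ actually belongs to $R_{k,l}$ (guaranteed by the half-open conventions) and lies inside $\{F[s,t] \le 1/2\}$ whenever the cell intersects this feasible region (automatic, since the corner minimizes $p$ on the cell). Everything else is a routine monotonicity argument in $p$.
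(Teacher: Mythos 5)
Your proof is correct and follows essentially the same route as the paper's: decompose the feasible region into half-open cells on which $N_n[s,t]$ is constant, observe that the corner $(x_{(k)},x_{(l)})$ minimizes $p = F[s,t]$ on its cell, and conclude by monotonicity of $p \mapsto (N-np)/\sqrt{np(1-p)+1}$ on $[0,1/2]$. The only cosmetic difference is that the paper checks the derivative is non-positive for every value of the numerator (so no case distinction is needed), whereas you establish monotonicity only where $N \ge np$ and then discard the points with negative statistic via the observation that the supremum is non-negative; both arguments are valid.
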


\begin{proof}
Define
\beq\label{R_n}
R_n(s,t) := \frac{N_n[s,t] - n F[s,t]}{\sqrt{n F[s,t] (1 -F[s,t]) + 1}}.
\eeq
Let $x_{(1)} \le \cdots \le x_{(n)}$ denote the ordered observations, and set $x_{(0)} = -\infty$ and $x_{(n+1)} = \infty$.  It suffices to show that, for any $1 \le i \le j \le n$ and any $(s,t)$ such that $x_{(i-1)} < s \le x_{(i)}$ and $x_{(j)} \le t < x_{(j+1)}$, in addition to $F[s,t] \le 1/2$, we have $R_n(x_{(i)}, x_{(j)}) \ge R_n(s,t)$.  The crucial observation is that $N_n[s,t] = N_n[x_{(i)}, x_{(j)}]$ while $F[x_{(i)}, x_{(j)}] \le F[s,t]$.  

It is thus enough to show that the function $p \mapsto (a - p)/(p(1-p) + b)^{1/2}$ is decreasing over $[0,1/2]$ for any $a, b \ge 0$.  This is so since this function has derivative
$- (a (1 -2 p) + 2 b + p)/(p(1-p) + b)^{3/2}$.
\end{proof}

\begin{thm} \label{thm:HC_scan0}
With $S_n$ defined as the statistic \eqref{HC_scan}, we have 
\beq
\P_0(S_n \ge 3\log n) \to 0.
\eeq
\end{thm}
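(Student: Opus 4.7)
My plan is to reduce the supremum to a maximum over $O(n^2)$ pairs using Lemma \ref{lem:HC_scan}, and then control each pair via Bernstein's inequality applied to a binomial.

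By Lemma \ref{lem:HC_scan}, $S_n$ equals the maximum of $R_n(X_{(i)}, X_{(j)})$ over pairs of order statistics (subject to $F[X_{(i)}, X_{(j)}] \le 1/2$). Under $\P_0$, the variables $U_{(k)} := F(X_{(k)})$ are the order statistics of $n$ i.i.d.\ uniforms, so with $V_{i,j} := U_{(j)} - U_{(i)} \sim \Beta(j-i, n-j+i+1)$ and $k := j-i$, and using $N_n[X_{(i)}, X_{(j)}] = k + 1$ and $F[X_{(i)}, X_{(j)}] = V_{i,j}$, we have (after dropping the constraint $V_{i,j} \le 1/2$ to pass to an upper bound)
\begin{equation}
S_n \le \max_{1 \le i \le j \le n} T_{i,j}, \qquad T_{i,j} := \frac{(k+1) - n V_{i,j}}{\sqrt{n V_{i,j}(1 - V_{i,j}) + 1}}.
\end{equation}
The same monotonicity used at the end of the proof of Lemma \ref{lem:HC_scan} shows that $T_{i,j}$ is decreasing in $V_{i,j}$ on $[0,1/2]$, so with $u_n := 3 \log n$, the event $\{T_{i,j} \ge u_n\}$ coincides with $\{V_{i,j} \le v_k^*\}$, where $v_k^*$ is the unique root in $[0,1/2]$ (when it exists) of $k + 1 - n v = u_n \sqrt{n v(1-v) + 1}$. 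Since the right-hand side is at least $u_n$, this event is vacuous unless $k \ge u_n - 1$.

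For each relevant $(i,j)$ I would use the standard identity $\P_0(V_{i,j} \le v) = \P(\Bin(n, v) \ge k)$ \citep[Th 11.1]{gibbons2011nonparametric} and invoke Bernstein's inequality. Writing $t := k - n v_k^* = u_n \sqrt{n v_k^*(1-v_k^*) + 1}$ and $a := n v_k^*(1-v_k^*) + 1$, Bernstein gives
\begin{equation}
\P(\Bin(n, v_k^*) \ge k) \le \exp\!\left(-\frac{t^2}{2 n v_k^*(1-v_k^*) + 2 t/3}\right) = \exp\!\left(-\frac{u_n^2 \, a}{2(a-1) + (2 u_n / 3)\sqrt{a}}\right).
\end{equation}
A short case analysis on $a$ shows the exponent is at least $(3/2) u_n$ uniformly in $k$: when $a \ge u_n^2$, the denominator is dominated by $2a$ and the exponent is of order $u_n^2/2$; when $1 \le a < u_n^2$, the denominator is dominated by $(2 u_n/3) \sqrt{a}$ and the exponent is of order $u_n \sqrt{a}/1 \ge u_n$. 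Either way, $\P(V_{i,j} \le v_k^*) \le \exp(-(3/2) u_n) = n^{-9/2}$.

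A union bound then gives $\P_0(S_n \ge u_n) \le \sum_{i \le j} \P(V_{i,j} \le v_k^*) \le n^2 \cdot n^{-9/2} = n^{-5/2} \to 0$. The main technical point is the uniform lower bound on the Bernstein exponent across both the sub-Gaussian ($a \ge u_n^2$) and sub-exponential ($a < u_n^2$) regimes; once that is in place, the union bound closes the argument with considerable room to spare, which matches the intuition that $3 \log n$ is a very loose envelope for $S_n$ relative to its true $\sqrt{2 \log n}$-type scale.
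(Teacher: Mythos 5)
Your argument is correct in substance but takes a different technical route from the paper. You work with the marginal law of the uniform spacing $V_{i,j}=U_{(j)}-U_{(i)}\sim\Beta(j-i,\,n-j+i+1)$ and convert its CDF into a binomial tail via $\P(V_{i,j}\le v)=\P(\Bin(n,v)\ge k)$ before applying Bernstein --- essentially the same device the paper reserves for the Tippett scan statistic in \lemref{tippett}. The paper instead conditions on the two endpoints $(X_i,X_j)$, so that $N_{i,j}-2\sim\Bin(n-2,p_{i,j})$ with $p_{i,j}$ deterministic given the conditioning; after absorbing the two endpoints into an additive $+2$, Bernstein yields the single clean bound $\P_0(S_{i,j}\ge s\mid X_i,X_j)\le\exp(-s)$ for $s\ge 6$, uniformly and with no need to solve for a critical value. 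That conditioning is exactly what your route has to pay for elsewhere: you must identify $v_k^*$ and then verify that the Bernstein exponent $u_n^2a/\bigl(2(a-1)+(2u_n/3)\sqrt a\bigr)$ is at least $\tfrac32 u_n$ \emph{uniformly} in $a\ge 1$. That claim is true --- the inequality reduces to $u_n\ge 3(1+1/\sqrt a)$, which holds for all $a\ge1$ once $u_n\ge6$, with the minimum attained at $a=1$ --- but your sketched two-case analysis does not quite establish it for intermediate $a$ (say $a=4$, where the ``dominant term'' heuristic only gives an exponent of order $\tfrac38 u_n\sqrt a$, not enough for the union bound against $n^2$ pairs), so you should carry out that one-line algebra explicitly. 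Two further small repairs: keep the constraint $V_{i,j}\le 1/2$ rather than dropping it, since the monotonicity of $v\mapsto (k+1-nv)/\sqrt{nv(1-v)+1}$ established in \lemref{HC_scan} holds only on $[0,1/2]$ (the containment $\{T_{i,j}\ge u_n,\,V_{i,j}\le 1/2\}\subseteq\{V_{i,j}\le v_k^*\}$, with $v_k^*:=1/2$ when no root exists in $[0,1/2]$, is what you actually use); and note that $k-nv_k^*=u_n\sqrt a-1$ rather than $u_n\sqrt a$, an off-by-one that is harmless since $u_n\to\infty$. With these details filled in, both proofs close with the same $O(n^2)$ union bound, and both have ample room to spare.
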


\begin{proof}
We place ourselves under the null hypothesis.
Recall the definition of $R_n$ in \eqref{R_n}.
By \lemref{HC_scan} and the fact that $R_n(X_i, X_i) = 1$ for all $i$, if $S_n \ge 3 \log n$ necessarily $S_n = S_n^* := \max_{i \ne j} R_n(X_i, X_j)$.  For any $i \ne j$, we have
\beq
R_n(X_i, X_j) \le 2 + S_{i,j},
\eeq
with
\beq\label{Nij}
S_{i,j} := \frac{N_{i,j} -2 - (n-2) p_{i,j}}{\sqrt{(n-2) p_{i,j}(1-p_{i,j}) + 1}}, \quad N_{i,j} := N_n[X_i, X_j], \quad p_{i,j} := F[X_i, X_j].
\eeq
The point of this reorganizing is that, given $(X_i, X_j)$, $N_{i,j}-2 \sim \Bin(n-2, p_{i,j})$,
and an application of Bernstein's inequality gives
\beq
\P_0(S_{i,j} \ge s \mid X_i, X_j) 
\le \exp\bigg(-\frac{s^2 b_{i,j}^2/2}{b_{i,j}^2 +s b_{i,j}/3}\bigg)
\le \exp\bigg(-\frac{s^2/2}{1 +s/3}\bigg)
\le \exp(-s), \quad \forall s \ge 6,
\eeq
because $b_{i,j} := \sqrt{(n - 2) p_{i,j} (1-p_{i,j}) + 1} \ge 1$.
Thus, with the union bound, as $n \to \infty$, we have
\begin{align*}
\P_0(S_n \ge 3 \log n) 
&= \P_0(\exists i \ne j : S_{i,j} + 2 \ge 3 \log n) \\
&\le \sum_{i < j} \P_0(S_{i,j} \ge 3 \log n - 2)
\le n^2 \exp(-3 \log n +2)
\to 0,
\end{align*}
which proves the statement.
\end{proof}

With \thmref{HC_scan0}, one obtains the following performance bound for the Stouffer scan test.

\begin{cor}
The Stouffer scan test is powerful if there is a sequence of intervals $([s_n,t_n])$ such that 
\beq\label{cor-stouffer1}
n \eps_n F[s_n -\mu_n, t_n -\mu_n] \gg \log n, \quad \text{and} \quad
n \eps_n^2 F[s_n -\mu_n, t_n -\mu_n]^2/F[s_n,t_n] \gg (\log n)^2.
\eeq
\end{cor}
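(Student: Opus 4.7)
The plan is to calibrate the Stouffer scan test by rejecting when $S_n \ge 3\log n$; by \thmref{HC_scan0} the Type I error then vanishes, so it suffices to show that, under $\P_1$, $R_n(s_n, t_n) \ge 3\log n$ with probability tending to $1$, where $R_n$ is defined in \eqref{R_n}. Introduce the shorthand $p_n := F[s_n, t_n]$, $p_n' := F[s_n - \mu_n, t_n - \mu_n]$, $A_n := np_n + 1$, and $B_n := n\eps_n p_n'$. In these terms the hypotheses \eqref{cor-stouffer1} translate to $B_n \gg \log n$ together with $B_n^2/A_n \gg (\log n)^2$ (the latter coinciding with the second part of \eqref{cor-stouffer1} when $np_n \ge 1$, and following from the former otherwise). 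Under $\P_1$, $N_n[s_n, t_n] \sim \Bin(n, q_n)$ with $q_n := p_n + \eps_n(p_n' - p_n)$.

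I would begin with two bookkeeping steps. First, $p_n \le 1/2$ eventually, so that $(s_n, t_n)$ is admissible in the supremum defining $S_n$: this follows from $A_n \ll B_n^2/(\log n)^2 \le (n\eps_n)^2/(\log n)^2$ combined with $\sqrt{n}\,\eps_n \to 0$. Second, $p_n' \ge 2 p_n$ eventually, since otherwise $B_n^2/A_n \le 2\eps_n B_n (np_n/A_n) \le 2n\eps_n^2 \to 0$, contradicting $B_n^2/A_n \gg (\log n)^2$. Consequently, $n(q_n - p_n) = n\eps_n(p_n' - p_n) \ge B_n/2$ eventually.

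The core estimate is to show that $N_n[s_n, t_n] \ge np_n + 3(\log n)\sqrt{A_n}$ with $\P_1$-probability tending to $1$, which implies $R_n(s_n, t_n) \ge 3\log n$ after bounding the denominator of $R_n$ above by $\sqrt{A_n}$. Since $B_n \gg (\log n)\sqrt{A_n}$, this event contains $\{nq_n - N_n[s_n, t_n] \le B_n/4\}$ eventually, so it suffices to control $\P_1(nq_n - N_n[s_n, t_n] > B_n/4)$. I would apply Bernstein's inequality to the binomial lower tail: the resulting exponent is of order $-B_n^2/(nq_n + B_n) \le -B_n^2/(A_n + 2 B_n)$, using $nq_n \le A_n + B_n$. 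Since $B_n^2/A_n \gg (\log n)^2$ and $B_n \gg \log n$, one has $B_n^2/(A_n + 2 B_n) \to \infty$ in either regime $A_n \ge B_n$ or $A_n < B_n$, so the Bernstein bound vanishes. The only mildly delicate point --- and the reason the conditions in \eqref{prp-scan1} are inflated by $\log n$ factors in \eqref{cor-stouffer1} --- is that beating the null-side threshold $3\log n$ from \thmref{HC_scan0} requires Bernstein rather than Chebyshev control of the binomial.
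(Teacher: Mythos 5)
Your proof is correct and follows essentially the same route as the paper: calibrate the test at the threshold $3\log n$ using \thmref{HC_scan0}, then show $R_n(s_n,t_n) \ge 3\log n$ with probability tending to one by checking $p_n \le 1/2$, $p_n' \gg p_n$, and concentration of $N_n[s_n,t_n]$ around $nq_n$. The only divergence is your use of Bernstein's inequality for the alternative-side concentration; your closing remark that Bernstein (rather than Chebyshev) is \emph{required} is inaccurate --- the paper gets by with Chebyshev, since once the mean shift $n\eps_n p_n'$ dominates both $\sqrt{nq_n}$ and $(\log n)\sqrt{np_n+1}$, convergence in probability suffices --- but this does not affect the validity of your argument.
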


\begin{proof}
By \thmref{HC_scan0}, the Stouffer scan test at level $\alpha$ is at least as powerful as the test $\{S_n \ge 3 \log n\}$, eventually.  Now, under the alternative, this test is powerful if we can prove that $p_n := F[s_n, t_n] \le 1/2$ and $R_n(s_n, t_n) \ge 3 \log n$.  
Define $p'_n := F[s_n - \mu_n, t_n - \mu_n]$ and $q_n := (1-\eps_n) p_n + \eps_n p'_n$, so that \eqref{cor-stouffer1} can be expressed as
\beq
n \eps_n p'_n \gg log n \to \infty, \quad \text{and} \quad
n \eps_n^2 {p'_n}^2/p_n \gg (log n)^2 \to \infty.
\eeq  
That $p_n \le 1/2$ is true, eventually, comes from the fact that
\beq
\infty \gets n \eps_n^2 {p'_n}^2/p_n \le n \eps_n^2/p_n,
\eeq
with $n \eps_n^2 \to 0$ by assumption, so that necessarily $p_n \to 0$.  Note that this implies that $q_n \to 0$ also.

Given that $N_n[s_n, t_n]$ is binomial with parameters $n$ and $q_n$, with $n q_n \ge n p'_n \to \infty$ by the first part of \eqref{cor-stouffer1}, we have $N_n[s_n, t_n] = n q_n + O_P(\sqrt{n q_n (1-q_n)})$, and so 
\beq
R_n(s_n, t_n) 
= \frac{n \eps_n (p'_n - p_n) + O_P(\sqrt{n q_n (1-q_n)})}{\sqrt{n p_n (1-p_n)+ 1}}
\sim \frac{n \eps_n p'_n + O_P(\sqrt{n q_n})}{\sqrt{n p_n + 1}},
\eeq
since $p'_n \gg p_n$, by the fact that
\beq
\infty \gets n \eps_n^2 {p'_n}^2/p_n = n \eps_n^2 p_n (p'_n/p_n)^2 = o(p'_n/p_n)^2.
\eeq
In addition, the same conditions imply
\beq
\frac{n \eps_n p'_n}{\sqrt{n q_n}} \asymp \sqrt{n \eps_n^2 {p'_n}^2/p_n} \bigvee n \eps_n p'_n \to \infty,
\eeq
so that 
\beq
R_n(s_n, t_n) 
\sim_P n \eps_n p'_n/\sqrt{n p_n + 1}
\asymp_P \sqrt{n \eps_n^2 {p'_n}^2/p_n} \bigvee n \eps_n p'_n
\gg \log n.
\eeq
We conclude that $R_n(s_n, t_n) \ge 3 \log n$ holds with probability tending to 1.
\end{proof}

With this performance bound, it is straightforward to verify that the Stouffer scan test performs as well as the oracle scan test to first order, at least in the context of the parameterization used in the models studied in \secref{generalized_Gaussian} and \secref{power_law}.  This comes from the fact that, in context of these sections, the quantity appearing in \eqref{cor-stouffer1} increases as a (fixed) positive power of $n$ under the alternative.   
We formalize this into the following statement, left without formal proof.

\begin{cor}\label{cor:stouffer}
The Stouffer scan test achieves the oracle scan detection boundary in all the settings considered in \secref{generalized_Gaussian} and \secref{power_law}.
\end{cor}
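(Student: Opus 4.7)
The plan is to invoke the performance bound for the Stouffer scan test established in the preceding corollary and, for each parameterization appearing in \secref{generalized_Gaussian} and \secref{power_law}, exhibit a sequence of intervals $([s_n,t_n])$ for which both quantities in \eqref{cor-stouffer1} grow as a strictly positive power of $n$.  Since any polynomial rate in $n$ dominates $(\log n)^2$, the stronger Stouffer-scan conditions then reduce to the weaker oracle-scan conditions already verified in the earlier propositions, with slack to spare.

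For the power-law class of \secref{power_law}, the interval $[s_n,t_n]=[\mu_n,\mu_n+v]$ used in the proof of \prpref{power_law} already does the job: with $v$ fixed so that $F[0,v]>0$, the first quantity in \eqref{cor-stouffer1} equals $n^{1-\beta}F[0,v]$, and the second equals $n^{r+1-2\beta+o(1)}$, both of order $n^{\eta+o(1)}$ with $\eta>0$ throughout the regime $r>2\beta-1$ and $\beta<1$.  This yields the Stouffer scan test achieving the boundary $r=2\beta-1$.

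For the extended generalized Gaussian and related classes of \secref{generalized_Gaussian}, the proofs of \prpref{generalized_gaussian} and \prpref{generalized_gaussian_other} construct an oracle threshold $t_n^\star$ such that $n\bar F(t_n^\star)$ and $n\eps_n^2 \bar F(t_n^\star-\mu_n)^2/\bar F(t_n^\star)$ are both of the form $n^{\eta+o(1)}$ with $\eta>0$ throughout the detectable region.  I would transport these constructions to the scan framework by taking $[s_n,t_n]=[t_n^\star,t_n^\star+\Delta_n]$, with $\Delta_n$ chosen just large enough that $F[s_n-\mu_n,t_n-\mu_n]$ is at least a constant fraction of $\bar F(t_n^\star-\mu_n)$ while $F[s_n,t_n]\le\bar F(t_n^\star)$.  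Under the tail regularity in \eqref{aggcondition} or \eqref{omega}, a $\Delta_n$ that is at worst polylogarithmic in $n$ suffices, and the two quantities in \eqref{cor-stouffer1} retain the same polynomial order as their threshold counterparts.

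The only nontrivial point is this choice of $\Delta_n$, namely verifying that an interval of controlled length captures a non-negligible fraction of the tail mass at $t_n^\star$ and at $t_n^\star-\mu_n$ in each subclass.  This is routine given the explicit form of $\varphi$ in each case, and since all downstream comparisons have polynomial slack in $n$, neither constant prefactors nor extra $\log n$ factors change the conclusion.  Combining the three cases gives the claim across all parameterizations of \secref{generalized_Gaussian} and \secref{power_law}.
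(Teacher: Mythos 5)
Your argument is correct and matches the paper's own (unwritten) justification: the corollary is stated there without formal proof, on exactly the grounds you give, namely that in every parameterization of \secref{generalized_Gaussian} and \secref{power_law} the two quantities in \eqref{cor-stouffer1} grow as a fixed positive power of $n$ once $r$ is strictly inside the detectable region, so the extra $\log n$ and $(\log n)^2$ factors relative to the oracle scan conditions \eqref{prp-scan1} cost nothing. Your interval construction for the settings of \secref{generalized_Gaussian} is fine, though the fuss over $\Delta_n$ can be avoided by simply taking $t_n$ so large that $F[s_n,t_n]\sim\bar F(t_n^\star)$ and $F[s_n-\mu_n,t_n-\mu_n]\sim\bar F(t_n^\star-\mu_n)$, which reduces \eqref{cor-stouffer1} directly to the already-verified, polynomially satisfied threshold conditions.
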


\subsection{Tippett scan test}
We study the Tippett scan test \eqref{BJ_scan}, which we denote by $T_n$.  We control this statistic under the null hypothesis by a simple application of the union bound.  A more refined control seems possible in view of \cite{moscovich2016exact}, where the limiting distribution of \eqref{BJ} is obtained.

\begin{prp} \label{prp:BJ_scan0}
With $T_n$ defined as the statistic \eqref{BJ_scan}, we have 
\beq
\P_0(T_n \le 1/n^3) \to 0.
\eeq
\end{prp}

\begin{proof}
Under the null, each $P_{i,j}$ is uniformly distributed in $[0,1]$.  Thus the union bound gives
\beq
\P_0(T_n \le 1/n^3)
\le n^2 \P_0(P_{i,j} \le 1/n^3)
= n^2/n^3 = 1/n \to 0,
\eeq
which concludes the proof.
\end{proof}

Thus most of the work goes into controlling the statistic under the alternative.  We do so by bounding the Tippett scan statistic by an expression that resembles that of the Stouffer scan statistic.  We make use of the following simple concentration bound.\footnote{\quad Many things are known about the beta distribution and order statistics in general, but we could not immediately find such a simple bound.}

\begin{lem}\label{lem:tippett}
For $k \in [n]$,
\beq
\Beta(u; k, n-k+1) \le \exp\bigg(- \frac{(k -n u)^2/2}{nu(1-u) + (k-nu)/3}\bigg), \quad 0 \le u \le k/n.
\eeq 
\end{lem}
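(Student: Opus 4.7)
The plan is to use the well-known identity relating the incomplete beta function to the binomial tail. Specifically, for integer parameters,
\beq
\Beta(u; k, n-k+1) = \P\bigl(\Bin(n,u) \ge k\bigr),
\eeq
which can be seen, for example, by noting that the event $\{U_{(k)} \le u\}$ for $U_1, \dots, U_n$ iid uniform on $[0,1]$ is the same as the event that at least $k$ of the $U_i$ land in $[0,u]$, and $U_{(k)} \sim \Beta(k, n-k+1)$ while $\#\{i: U_i \le u\} \sim \Bin(n,u)$.

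Once this identity is in hand, the hypothesis $0 \le u \le k/n$ means $k - nu \ge 0$, so we are bounding an upper tail of a binomial around its mean. Applying Bernstein's inequality to $X \sim \Bin(n,u)$ (with mean $nu$ and variance $nu(1-u)$, and individual summands bounded in $[0,1]$, so the Bernstein constant is $1/3$) with deviation $t = k - nu$ yields
\beq
\P\bigl(X \ge nu + t\bigr) \le \exp\bigg(-\frac{t^2/2}{nu(1-u) + t/3}\bigg),
\eeq
which is precisely the claimed inequality after substituting $t = k - nu$.

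There is no real obstacle; the proof is essentially a two-line invocation of the binomial--beta identity followed by Bernstein. The only point worth verifying carefully is that the stated range $u \le k/n$ ensures the Bernstein bound is applied to a genuine upper deviation (so that the inequality is non-trivial and the denominator is positive), which it clearly does.
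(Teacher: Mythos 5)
Your proof is correct and follows the same route as the paper's: identify $\Beta(u;k,n-k+1)$ with $\P(U_{k:n}\le u)=\P(\Bin(n,u)\ge k)$ and then apply Bernstein's inequality to the binomial upper tail. The paper's proof is exactly this two-step argument, so there is nothing to add.
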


\begin{proof}
Let $U_{k:n}$ denote the $k$-th order statistic of an iid sample of size $n$ from the uniform distribution on $[0,1]$.
For $u \in [0,1]$ such that $nu \le k$, we have 
\begin{align*}
\Beta(u; k, n-k+1)
= \P(U_{k:n} \le u)
= \P(\Bin(n, u) \ge k),
\end{align*}
and we conclude with an application of Bernstein's inequality.
\end{proof}

\begin{prp}
The Tippett scan test is powerful if there is a sequence of intervals $([s_n,t_n])$ such that
\beq\label{prp-tippett1}
n \eps_n F[s_n -\mu_n, t_n -\mu_n] \gg \sqrt{\log n}, \qquad
n \eps_n^2 F[s_n -\mu_n, t_n -\mu_n]^2/F[s_n,t_n] \gg \log n.
\eeq
\end{prp}

\begin{proof}
Recall that $T_n = \min_{i < j} P_{i,j}$ and the expression of $P_{i,j}$.  Thus an application of \lemref{tippett} gives
\beq
T_n \le 1/n^3 \iff 
\max_{i < j} \frac{(j-i - V_{i,j})_+^2/2}{n V_{i,j} (1-V_{i,j}) + (j-i - V_{i,j})_+/3} \ge 3 \log n,
\eeq
where $V_{i,j} := U_{(j)} - U_{(i)}$, after taking a logarithm.
Moreover, $V_{i,j} = F[X_{(n-j+1)}, X_{(n-i+1)}]$ and $j-i = N_n[X_{(n-j+1)}, X_{(n-i+1)}] - 1$, yielding
\beq
T_n \le 1/n^3 \iff 
\max_{i\ne j} \frac{(N_{i,j} -1 -n p_{i,j})_+^2}{n p_{i,j} (1-p_{i,j}) +(N_{i,j} -1 -n p_{i,j})_+} \ge 6 \log n,
\eeq
with the notation of \eqref{Nij}.
The latter inequality holds when there is $i \ne j$ such that
\beq
N_{i,j} -1 -n p_{i,j} \ge 12 \log n 
\quad \text{and} \quad
\frac{N_{i,j} -1 -n p_{i,j}}{\sqrt{n p_{i,j} (1-p_{i,j})}} \ge \sqrt{12 \log n},
\eeq
which is the case when
\beq\label{tippett-proof1}
n p_{i,j} \ge \sqrt{12 \log n}
\quad \text{and} \quad
\frac{N_{i,j} -1 -n p_{i,j}}{\sqrt{n p_{i,j} (1-p_{i,j})}} \ge \sqrt{12 \log n}.
\eeq

Let $(s_n, t_n)$ be as in the statement and let $(i,j)$ be such that $U_{(i)} \le s < U_{(i+1)}$ and $U_{(j-1)} < t \le U_{(j)}$.
By construction, $p_{i,j} \ge F[s_n, t_n]$, so that the first part of \eqref{prp-tippett1} implies that the first part of \eqref{tippett-proof1} holds eventually.  
We also have $N_{i,j} \ge N_n[s_n,t_n] - 2$, so that
\beq
\frac{N_{i,j} -1 -n p_{i,j}}{\sqrt{n p_{i,j} (1-p_{i,j})}} \ge \frac{N_n[s_n,t_n] -3 -n F[s_n,t_n]}{\sqrt{n F[s_n,t_n] (1-F[s_n,t_n])}},
\eeq
and the quantity on the RHS is controlled using the second part \eqref{prp-tippett1} exactly as in the proof of \prpref{scan}.
\end{proof}

Here too, these results make it straightforward to verify that the Tippett scan test performs as well as the oracle scan test (to first order) in the models and regimes seen earlier, leading us to state the following (left without a formal proof).

\begin{cor}\label{cor:tippett}
The Tippett scan test achieves the oracle scan detection boundary in all the settings considered in \secref{generalized_Gaussian} and \secref{power_law}.
\end{cor}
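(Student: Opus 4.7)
The plan is to follow the template of Corollary~\ref{cor:stouffer}: in every setting considered in \secref{generalized_Gaussian} and \secref{power_law}, I would exhibit a sequence of intervals $([s_n,t_n])$ for which both conditions in \eqref{prp-tippett1} are satisfied whenever the parameters lie strictly above the oracle scan detection boundary. The key observation is that \eqref{prp-tippett1} differs from the oracle scan condition \eqref{prp-scan1} only by polylogarithmic factors in $n$, while in all the parameterizations under consideration the quantities appearing in \eqref{prp-scan1} diverge as fixed positive powers of $n$ under the alternative; the polylogarithmic slack is therefore harmless.

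For the power-law setting of \prpref{power_law}, I would simply reuse the intervals from the oracle proof, namely $s_n = \mu_n$ and $t_n = \mu_n + v$ with $v$ a fixed constant such that $F[0,v] > 0$. Strictly above the boundary (so $r > 2\beta - 1$ and $\beta < 1$), direct substitution gives $n \eps_n F[0,v] = n^{1-\beta}$ and $n \eps_n^2 F[0,v]^2/F[s_n,t_n] = n^{r+1-2\beta+o(1)}$, both of which are $n$ raised to a strictly positive exponent and therefore comfortably dominate $\sqrt{\log n}$ and $\log n$, respectively.

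For the extended generalized Gaussian models and related classes of \secref{generalized_Gaussian}, the oracle scan detection boundary coincides with the oracle threshold boundary, since a scan on a long interval whose left endpoint is the oracle threshold captures essentially all the mass that the one-sided threshold does. Concretely, with $t_n^\star$ denoting the threshold used in the proof of \prpref{generalized_gaussian} or \prpref{generalized_gaussian_other}, I would take $s_n = t_n^\star$ and choose any $t_n$ large enough that $\bar F(t_n) = o(\bar F(s_n))$ and $\bar F(t_n - \mu_n) = o(\bar F(s_n - \mu_n))$, so that $F[s_n, t_n] \sim \bar F(s_n)$ and $F[s_n - \mu_n, t_n - \mu_n] \sim \bar F(s_n - \mu_n)$. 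Inspection of those oracle proofs shows that strictly above the boundary, $(1-2\beta)\log n - 2\varphi(s_n - \mu_n) + \varphi(s_n)$ is bounded below by $\eta \log n$ for some fixed $\eta > 0$, which yields $n \eps_n^2 \bar F(s_n - \mu_n)^2/\bar F(s_n) \ge n^\eta$; this dominates $\log n$, and the first quantity in \eqref{prp-tippett1} is handled the same way.

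The main obstacle is organizational rather than analytic: one must traverse each case of each piecewise-defined boundary (the three cases in the proof of \prpref{generalized_gaussian}, plus \prpref{generalized_gaussian_other} and its two corollary examples, plus the power-law case). No new idea is required beyond the observation already exploited for the Stouffer statistic: the strict inequalities in the exponents of $n$ produce polynomial divergence, which dominates any polylogarithmic threshold.
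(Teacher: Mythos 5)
Your proposal is correct and follows exactly the reasoning the paper alludes to: the paper leaves this corollary without a formal proof, pointing only to the fact that the quantities in \eqref{prp-tippett1} differ from those in \eqref{prp-scan1} by polylogarithmic factors while growing as fixed positive powers of $n$ strictly above the boundary, which is precisely your argument. Your additional care in reducing the scan to the threshold intervals for the settings of \secref{generalized_Gaussian} (where the paper only establishes the oracle threshold boundary) is a welcome filling-in of a detail the paper glosses over, but it is not a different approach.
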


\section{Numerical experiments} \label{sec:numerics}

We performed small-scale numerical experiments to probe our theory.  We worked with Student t-distributions with varying numbers of degrees of freedom, ${\rm df} = 0.5, 1, 2, 5\}$.
Recall that the Student t-distribution with $k$ degrees of freedom has density $\propto (1 + x/k)^{-(k+1)/2}$.
We considered three different scenarios with varying sparsity exponents, $\beta = 0.6, 0.7, 0.8$.  
The sample size was set to $n = 30,000$. 
We compared the higher criticism test, the Berk-Jones test, the Stouffer scan test, and the Tippett scan test in each of these settings.  We repeat each setting 200 times. 
See \figref{6}, \figref{7}, and \figref{8}.

As the theory predicts, We can check that when the number of degrees of freedom is smaller, implying that the base distribution has fatter tails, the scan procedures dominate the threshold procedure.  The threshold procedures become dominant as the tails become lighter.  This is so at this particular sample size as, in principle, our theory indicates that with a larger sample size, the scan procedures would still dominate.  The transition from powerless to powerful takes place at a larger effect size than predicted by the theory, which is also explain by the limited sample size.\footnote{The scan tests have computational complexity of order $O(n^2)$, which has limited the scale of our experiments.}

\section{Discussion} \label{sec:discussion}
While scan tests are commonly used in a number of detection problems, threshold tests are almost exclusively used in multiple testing situations.  The main purpose of our work here was to reveal that scan tests can improve on threshold tests in somewhat standard multiple testing settings, particularly when the null distribution ($F$ in the paper) has heavy tails.  

\paragraph{Likelihood ratio performance bounds}
Given our main objective, it was more natural to consider oracle-type performance bounds rather than using the likelihood ratio performance as benchmark.  
We can say nonetheless that, for representative models, the oracle threshold boundaries stated in \prpref{generalized_gaussian} and \prpref{generalized_gaussian_other} match those of the likelihood ratio test --- for example, this is true of generalized Gaussian models where $F$ has density of the form $f(t) \propto \exp(-|t|^a)$ for some $a > 0$. 
The same is true of the oracle scan boundary stated in \prpref{power_law} --- for example, this is true of power law models where $F$ has density of the form $f(t) \propto (1+|t|^a)^{-1}$ for some $a > 0$.

\paragraph{Nonparametric approaches}
\citet{arias2017distribution} consider the situation where the null distribution, $F$, is symmetric about 0 but otherwise unknown.  They suggest two tests for symmetry: the CUSUM sign test and the tail-run test, which are meant to be the nonparametric equivalent of the higher criticism test and the tail-run sign test, respectively.  Back-of-the-envelope calculations seem to indicate that these nonparametric tests achieve the same detection boundaries as their parametric counterparts in all the settings considered here.  

\begin{figure}[h!]
\centering
\includegraphics[scale=0.6]{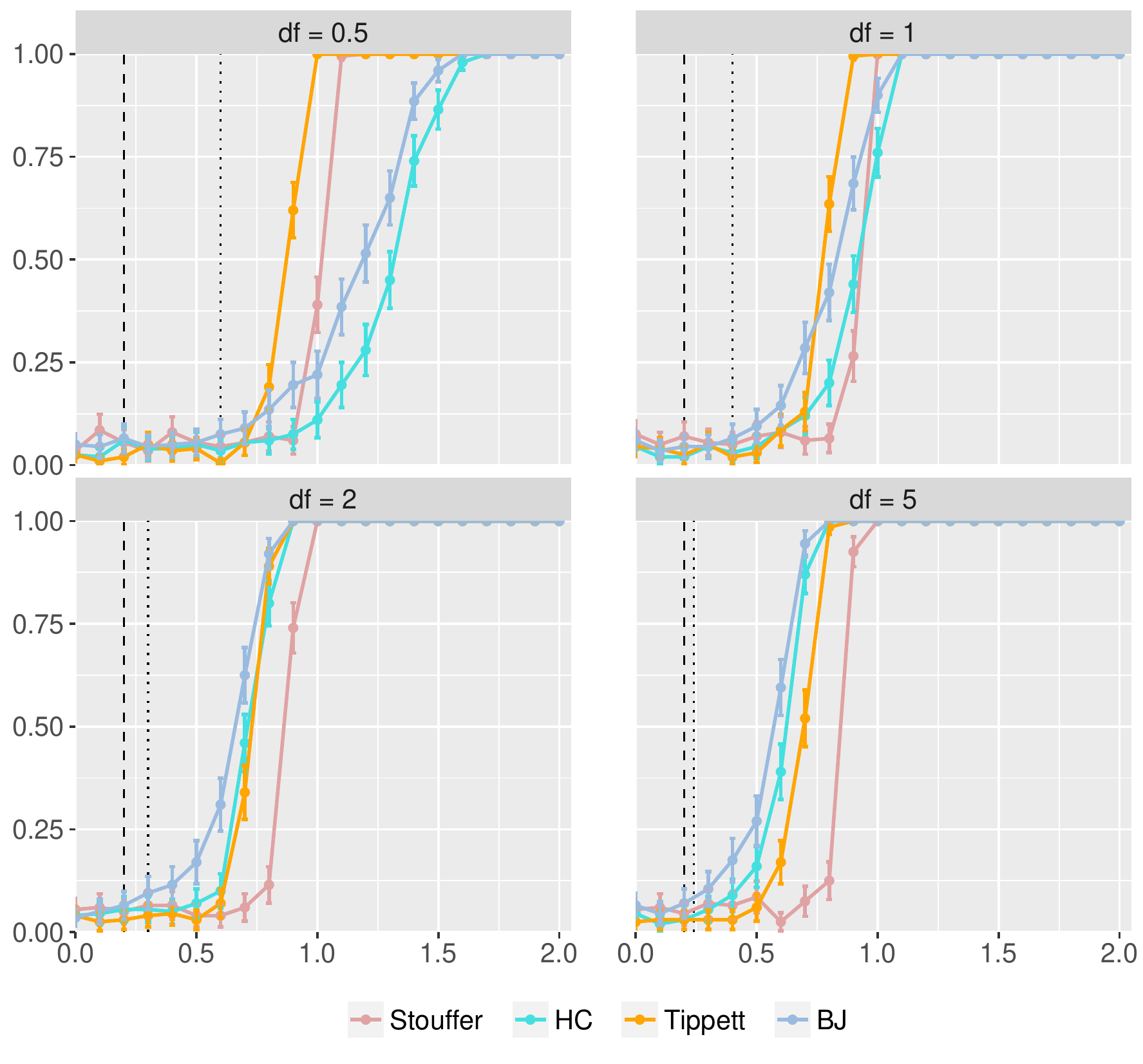}
\caption{Here $\beta = 0.6$, the x-axis represents $r$ in the parameterization \eqref{mu2}, y-axis the power of the tests identified in the legend.  Each subfigure corresponds to a Student t-distribution with the specified number of degrees of freedom.  The black dashed vertical line corresponds to the oracle scan detection boundary established in \prpref{power_law}, while the dotted line corresponds to the oracle threshold detection boundary established in \prpref{oracle_threshold_power_law}.}
\label{fig:6}
\end{figure}

\begin{figure}[h!]
\centering
\includegraphics[scale=0.6]{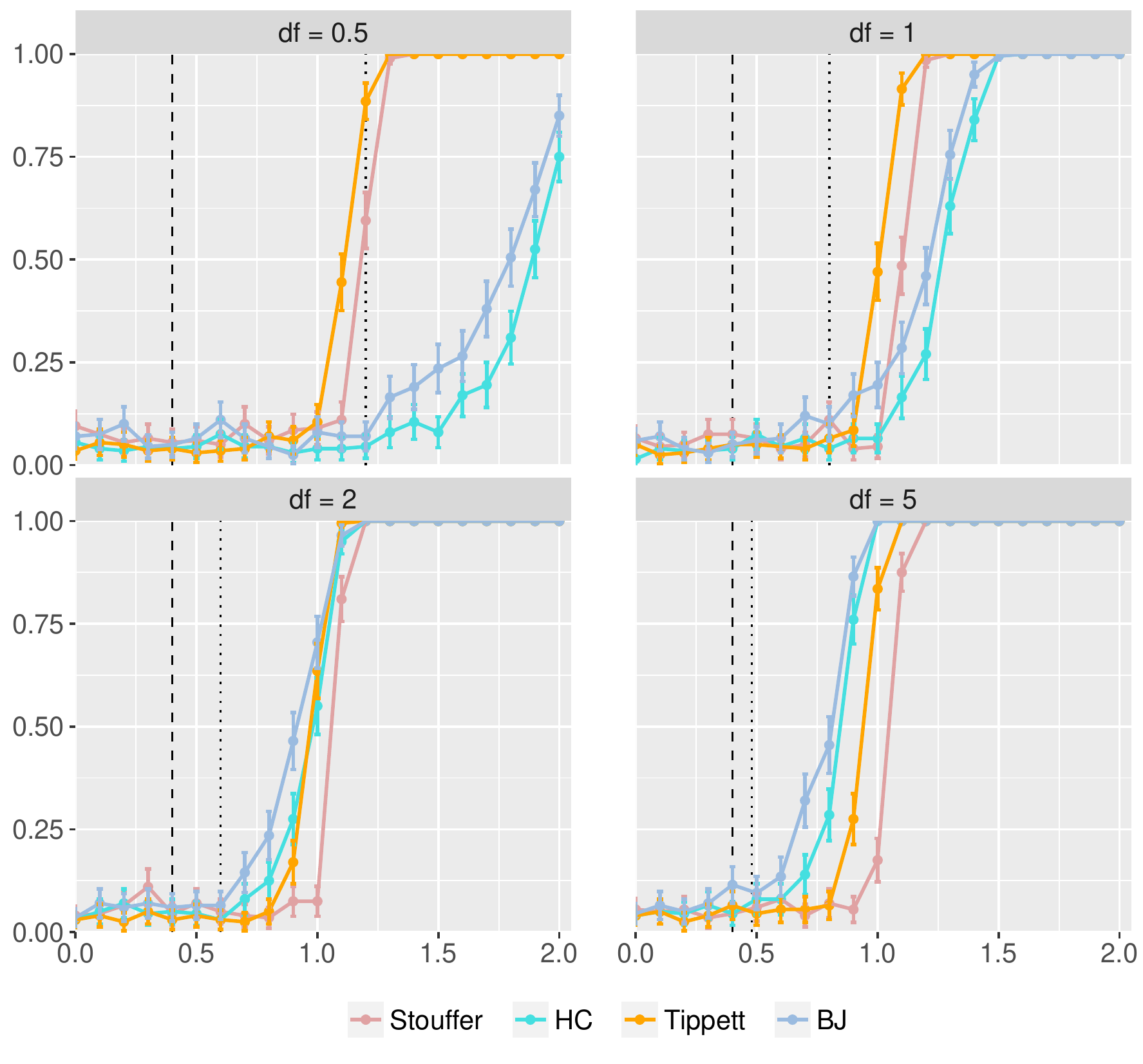}
\caption{Here $\beta = 0.7$, otherwise, see \figref{6} for more details.}
\label{fig:7}
\end{figure}

\begin{figure}[h!]
\centering
\includegraphics[scale=0.6]{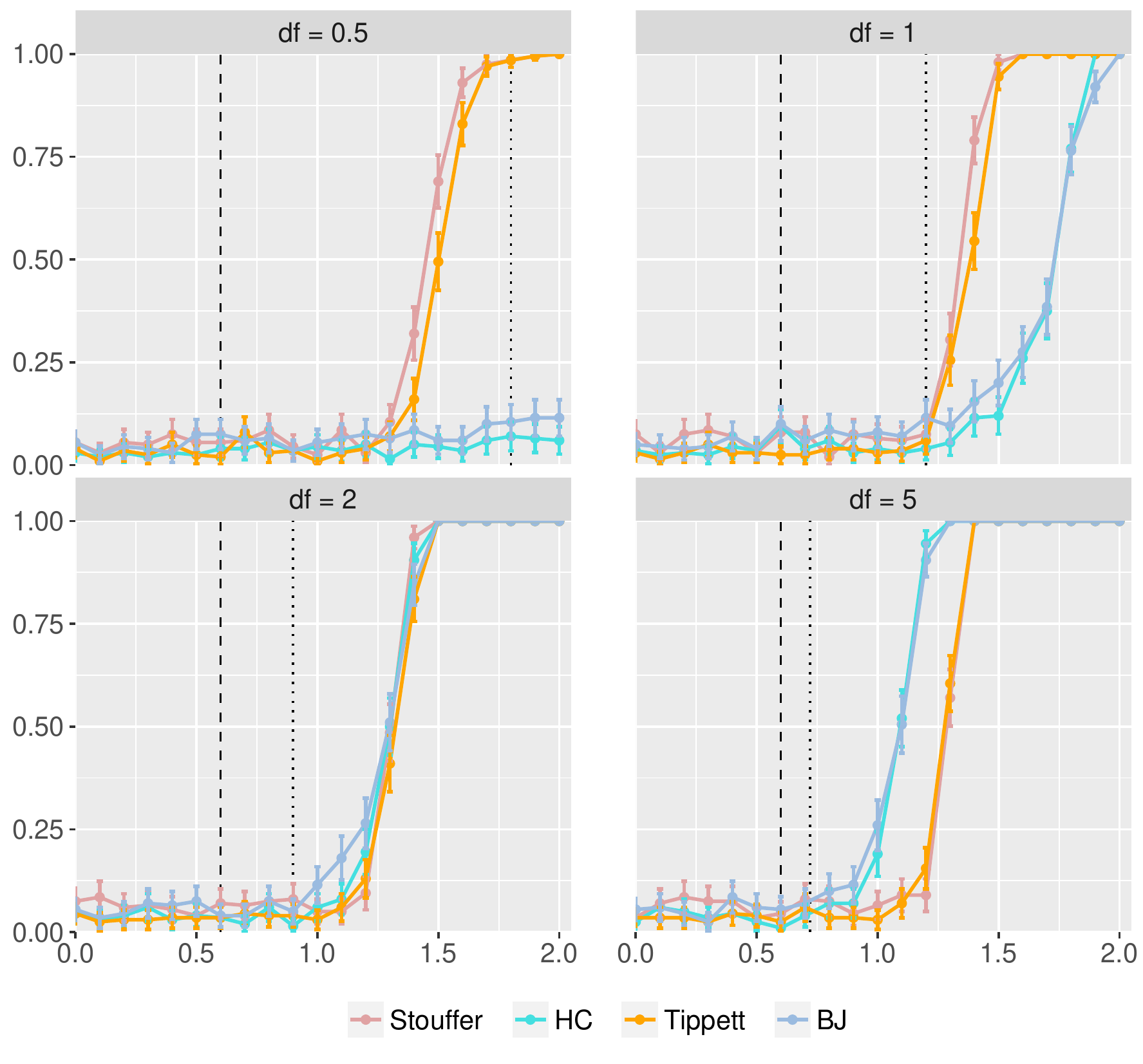}
\caption{Here $\beta = 0.8$, otherwise, see \figref{6} for more details.}
\label{fig:8}
\end{figure}

\bibliographystyle{abbrvnat}
\bibliography{ref}

\begin{thebibliography}{18}
\providecommand{\natexlab}[1]{#1}
\providecommand{\url}[1]{\texttt{#1}}
\expandafter\ifx\csname urlstyle\endcsname\relax
  \providecommand{\doi}[1]{doi: #1}\else
  \providecommand{\doi}{doi: \begingroup \urlstyle{rm}\Url}\fi

\bibitem[Anderson and Darling(1952)]{anderson1952asymptotic}
T.~W. Anderson and D.~A. Darling.
\newblock Asymptotic theory of certain" goodness of fit" criteria based on
  stochastic processes.
\newblock \emph{The annals of mathematical statistics}, pages 193--212, 1952.

\bibitem[Arias-Castro and Wang(2017)]{arias2017distribution}
E.~Arias-Castro and M.~Wang.
\newblock Distribution-free tests for sparse heterogeneous mixtures.
\newblock \emph{Test}, 26\penalty0 (1):\penalty0 71--94, 2017.

\bibitem[Arias-Castro et~al.(2005)Arias-Castro, Donoho, and Huo]{arias2005near}
E.~Arias-Castro, D.~L. Donoho, and X.~Huo.
\newblock Near-optimal detection of geometric objects by fast multiscale
  methods.
\newblock \emph{IEEE Transactions on Information Theory}, 51\penalty0
  (7):\penalty0 2402--2425, 2005.

\bibitem[Berk and Jones(1979)]{berk1979goodness}
R.~H. Berk and D.~H. Jones.
\newblock Goodness-of-fit test statistics that dominate the kolmogorov
  statistics.
\newblock \emph{Probability theory and related fields}, 47\penalty0
  (1):\penalty0 47--59, 1979.

\bibitem[Cai and Wu(2014)]{cai2014optimal}
T.~T. Cai and Y.~Wu.
\newblock Optimal detection of sparse mixtures against a given null
  distribution.
\newblock \emph{IEEE Transactions on Information Theory}, 60\penalty0
  (4):\penalty0 2217--2232, 2014.

\bibitem[Cai et~al.(2011)Cai, Jeng, and Jin]{cai2011optimal}
T.~T. Cai, X.~J. Jeng, and J.~Jin.
\newblock Optimal detection of heterogeneous and heteroscedastic mixtures.
\newblock \emph{Journal of the Royal Statistical Society: Series B (Statistical
  Methodology)}, 73\penalty0 (5):\penalty0 629--662, 2011.

\bibitem[Donoho and Jin(2004)]{donoho2004higher}
D.~Donoho and J.~Jin.
\newblock Higher criticism for detecting sparse heterogeneous mixtures.
\newblock \emph{The Annals of Statistics}, 32\penalty0 (3):\penalty0 962--994,
  2004.

\bibitem[Gibbons and Chakraborti(2011)]{gibbons2011nonparametric}
J.~D. Gibbons and S.~Chakraborti.
\newblock Nonparametric statistical inference.
\newblock In \emph{International encyclopedia of statistical science}, pages
  977--979. Springer, 2011.

\bibitem[Huber and Ronchetti(2009)]{huber2009robust}
P.~J. Huber and E.~M. Ronchetti.
\newblock \emph{Robust Statistics}.
\newblock John Wiley \& Sons, 2009.

\bibitem[Ingster(1997)]{ingster1997some}
Y.~I. Ingster.
\newblock Some problems of hypothesis testing leading to infinitely divisible
  distributions.
\newblock \emph{Mathematical Methods of Statistics}, 6\penalty0 (1):\penalty0
  47--69, 1997.

\bibitem[Jaeschke(1979)]{jaeschke1979asymptotic}
D.~Jaeschke.
\newblock The asymptotic distribution of the supremum of the standardized
  empirical distribution function on subintervals.
\newblock \emph{The Annals of Statistics}, 7\penalty0 (1):\penalty0 108--115,
  1979.

\bibitem[Kabluchko(2011)]{kabluchko2011extremes}
Z.~Kabluchko.
\newblock Extremes of the standardized gaussian noise.
\newblock \emph{Stochastic Processes and their Applications}, 121\penalty0
  (3):\penalty0 515--533, 2011.

\bibitem[Kulldorff(1997)]{kulldorff1997spatial}
M.~Kulldorff.
\newblock A spatial scan statistic.
\newblock \emph{Communications in Statistics-Theory and methods}, 26\penalty0
  (6):\penalty0 1481--1496, 1997.

\bibitem[Moscovich et~al.(2016)Moscovich, Nadler, Spiegelman,
  et~al.]{moscovich2016exact}
A.~Moscovich, B.~Nadler, C.~Spiegelman, et~al.
\newblock On the exact berk-jones statistics and their $ p $-value calculation.
\newblock \emph{Electronic Journal of Statistics}, 10\penalty0 (2):\penalty0
  2329--2354, 2016.

\bibitem[Naus(1965)]{naus1965distribution}
J.~I. Naus.
\newblock The distribution of the size of the maximum cluster of points on a
  line.
\newblock \emph{Journal of the American Statistical Association}, 60\penalty0
  (310):\penalty0 532--538, 1965.

\bibitem[Sharpnack and Arias-Castro(2016)]{sharpnack2016exact}
J.~Sharpnack and Arias-Castro.
\newblock Exact asymptotics for the scan statistic and fast alternatives.
\newblock \emph{Electronic Journal of Statistics}, 10\penalty0 (2):\penalty0
  2641--2684, 2016.

\bibitem[Stouffer et~al.(1949)Stouffer, Suchman, DeVinney, Star, and
  Williams~Jr]{stouffer1949american}
S.~A. Stouffer, E.~A. Suchman, L.~C. DeVinney, S.~A. Star, and R.~M.
  Williams~Jr.
\newblock \emph{The American soldier, Vol 1: Adjustment during army life}.
\newblock Princeton University Press, 1949.

\bibitem[Tippett(1931)]{tippett1931methods}
L.~H.~C. Tippett.
\newblock \emph{Methods of statistics}.
\newblock Williams Norgate: London, 1931.

\end{thebibliography}

\end{document}